\newtheorem{theorem}{Theorem}[section]
\newtheorem{lemma}[theorem]{Lemma}
\newtheorem{cor}[theorem]{Corollary}
\theoremstyle{definition}
\newtheorem{dfn}[theorem]{Definition}
\title{The Stable Rank of Diagonally Constructed ASH Algebras}
\author{James Lutley}
\date{\today}
\address{Department of Mathematics\\
  University of Toronto\\
  Toronto, Ontario, Canada M5S 2E4}
\email{james.lutley@mail.utoronto.ca}
\begin{document}

\begin{abstract}
We introduce a class of recursive subhomogeneous algebras that we call diagonal subhomogeneous and we give a notion of diagonal maps between these algebras. We show that any simple limit of diagonal subhomogeneous algebras with diagonal maps has stable rank one. As an application we show that for any minimal homeomorphism of a compact Hausdorff space the associated crossed product has stable rank one.\end{abstract}
\maketitle

Given maps $\sigma_1,\ldots,\sigma_k: Y\to X$, where $X,Y$ are compact Hausdorff spaces, we may readily construct a homomorphism from $C(X, M_n)$ to $C(Y, M_{nk})$ by sending $f\in (X, M_n)$ to $\mathrm{diag}(f\circ \sigma_1, \ldots, f\circ \sigma_k)$, and we say this map is diagonal. Diagonal maps and their generalizations have played a prominent role in constructing examples of AH algebras, including the Goodearl construction \cite{good} and Villadsen's examples \cite{vil1}\cite{vil2}. It was shown by Elliott, Ho, and Toms \cite{ho} that any simple AH algebra constructed from diagonal maps has stable rank one. Our present work uses the techniques developed in that paper and its antecedents and applies them to a certain class of subhomogeneous algebras. Maps between these algebras cannot be said to be diagonal in the above sense: we do not have perfect analogues of the single valued eigenvalue maps $\sigma_i$. However, by restricting our attention to a class of subhomogeneous algebras in which there is a rigid notion of a diagonal, we can readily define a class of maps which send each point in the spectrum of the range algebra to an ordered list of eigenvalues in the domain algebra. Thus we give a notion of diagonal maps, and it turns out these are enough to once again obtain stable rank one. This class of algebras is inspired by the orbit breaking algebras introduced by Q. Lin following from work of Putnam \cite{putnam}. Using results of Archey and Phillips \cite{archey}, we are able to show that dynamical cross products which come from a single minimal homeomorphism of a compact Hausdorff space have stable rank one.

This paper is constructed as follows: in section 1,  we introduce the class of diagonal subhomogeneous algebras that we are able to work with. We define diagonal maps between these algebras and show that orbit-breaking algebras are DSH algebras with diagonal inclusions. In section 2, we recall the families of unitary paths between permutation matrices which were used in \cite{ho}. In section 3 we show how to construct unitary elements within DSH algebras using the matrices of section 2. We then detail in section 4 how the assumption of simplicity on the limit algebra allows us to ensure that any non-invertible element is very near to a function in a sequence algebra which has a non-invertible image in any representation. We can then multiply by the unitaries we have constructed to obtain a nilpotent element. We employ a trick of R\o rdam that has become a standard argument for proving stable rank one: since nilpotent elements lie in the closure of the invertibles, if $ufv$ is nilpotent and $u,v$ are unitaries then $f$ is approximately invertible. In section 5 we prove our main theorem, that simple diagonal limits of diagonal subhomogenous algebras have stable rank one. It then follows from \cite{archey} that dynamical cross products coming from minimal homeomorphisms on compact spaces have stable rank one.

\section{Diagonal Subhomogeneous Algebras}

We define a well-behaved class of unital subhomogeneous algebras that behaves like a diagonal version of recursive subhomogeneous (RSH) algebras as defined in \cite{phillips2}.

\begin{dfn} The class of {\em recursive subhomogeneous algebras} is the smallest one that satisfies the following conditions.
\begin{enumerate} \item If $X$ is a compact Hausdorff space and $n\in \mathbb{N}$ then $C(X,M_n)$ is RSH.
\item If $A$ is RSH and $X$ is a compact Hausdorff space with a closed subset $Y$ and $\phi: A \rightarrow C(Y,M_n)$ is any unital homomorphism and $\rho: C(X,M_n)\rightarrow C(Y,M_n)$ is the restriction homomorphism then
$$A\oplus_{C(Y,M_n)} C(X,M_n)=\{ (a,f)\in A\oplus C(X,M_n): \phi(a)=\rho(f)\}$$
is RSH.\end{enumerate}\end{dfn}

It follows that we can write any RSH algebra $A$ in terms of a finite composition sequence using spaces $X_1,\ldots,X_l$ that contain closed subspaces $Y_1,\ldots,Y_l$; integers $n_1,\ldots,n_l$; and unital homomorphisms $\phi_1,\ldots,\phi_{l-1}$ using the above notation
$$A=\Big{[} \cdots [[ C_1 \oplus_{C_2^\prime} C_2]\oplus_{C_3^\prime} C_3]\cdots\Big{]} \oplus_{C_l^\prime} C_l$$
where $C_i= C(X_i,M_{n_i})$ and $C_i^\prime= C(Y_i,M_{n_i})$. We refer to $l$ as the length of the composition sequence.

Given a subalgebra $A\subseteq \bigoplus_{i=1}^l C(X_i,M_{n_i})$ and any $f\in A$, we will denote by $f_i$ the summand of $f$ in $C(x_i,M_{n_i})$. 

\begin{dfn} Suppose we have compact Hausdorff spaces $X_1,\ldots,X_l$, closed subspaces $Y_i\subseteq X_i$, dimensions $n_1,\ldots,n_l$, $C^*$-algebras $A_i\subseteq \bigoplus_{j=1}^i C(X_j,M_{n_j})$ such that $A_1=C(X_1,M_{n_1})$ and homomorphisms $\phi_i:A_i\to C(Y_{i+1},M_{n_{i+1}})$ for $i\in\{1,\ldots,l-1\}$ such that for each $y\in Y_{i+1}$ there are corresponding points $x_1,\ldots,x_t$ where $x_j\in X_{i_j}$ such that $\phi_i(f)_{i+1}(y)=\mathrm{diag}(f_{i_1}(x_1),\ldots,f_{i_t}(x_t))$ and $A_{i+1}=A_i\oplus_{C(Y_{i+1},M_{n_{i+1}})} C(X_{i+1},M_{n_{i+1}})$, then we say $A_l$ is {\em diagonal subhomogeneous} (DSH).\end{dfn}

Note that we can always assume that when $x\in Y_i$, each of the corresponding points $x_1,\ldots,x_t$ lies in $X_{i^\prime}\setminus Y_{i^\prime}$ for some $i^{\prime}<i$. We can do this by taking the list given by diagonality $x_{1}^\prime, \ldots x_{t}^\prime$, and whenever $x_{i}\in  Y_{i^\prime}$, we can replace it with the sequence of points for $x_i$ in the construction of $\phi_{i^\prime-1}$. Iterating, we eventually obtain a list as desired. 
Furthermore, we can check in the definition that if $Y_i$ has a non-empty interior, we may delete $\mathrm{int}(Y_i)$ without changing the algebra. That is, if $X_{i+1}^\prime= X_{i+1}\setminus \mathrm{int}(Y_i)$ and $Y_i^\prime=Y\setminus\mathrm{int}(Y_i)$, then 
$$\{ (a,f) \in A \oplus C(X_{i+1}^\prime, M_{n_{i+1}}) : \forall j, y\in Y_i^\prime, f_{i+1}(y) = \mathrm{diag}( a_{i_1}(x_{y_1}),\ldots,a_{i_s} (x_{y_s}) )\}$$
is isomorphic to the algebra constructed in the definition. Therefore we may assume each set $Y_i$ has an empty interior.

We will show that the orbit breaking algebras described by Q. Lin are examples of DSH algebras. 

In the DSH construction, for each space $i\in \{1,\ldots,l\}$ and each $k\in 1,\ldots, n_i$, we may define $B_{i,k}$ to be the subset of $X_i$ such that a new representation in the diagonal decomposition begins at line $k$. Every point in $X_i$ lies in $B_{i,1}$. For $k>1$, $B_{i,k}\subseteq Y_i$. When $y\in Y_i$ with corresponding points $x_{1}, \ldots ,x_{s}$ in  $X_{i_1} ,\ldots, X_{i_s}$, $y\in B_{i,k}$ for $k=n_{i_1}+1, n_{i_1}+n_{i_2}+1$ and so on.  Note that $n_1$, the smallest dimension of any representation of $A$, also gives a restriction on the sets $B_{i,k}$ in which a given point can appear. If $y\in B_{i,k}$, then $y\notin B_{i,k+1},\ldots,B_{i,k+n_1-1}$.

If $C$ is an $n\times n$ matrix, let us say $C$ has a block point at position $k$ if $C_{i,j}=0$ whenever either $i\geq k$ and $j<k$ or $i<k$ and $j\geq k$.

\begin{lemma}Suppose that $A$ is a DSH algebra. For each $i,k$, $x\in X_i$ lies in $B_{i,k}$ if and only if for every $f\in A$, $f_i(x)$ has a block point at position $k$. Every such set $B_{i,k}$ is closed.\label{blockchar} \end{lemma}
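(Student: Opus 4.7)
The plan is to prove the iff-characterization of $B_{i,k}$ in two directions and then deduce closedness from it.

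For the forward direction I would handle $k = 1$ trivially (the conditions defining a block point at $1$ are vacuous, and $B_{i,1} = X_i$), and for $k > 1$ argue from the DSH structure: if $x \in B_{i,k}$ then $x \in Y_i$ and $k = n_{i_1} + \cdots + n_{i_{j-1}} + 1$ for some $j$, where $x_1,\ldots,x_s$ are the corresponding points for $x$. The compatibility condition in the definition of DSH then forces $f_i(x) = \mathrm{diag}(f_{i_1}(x_1), \ldots, f_{i_s}(x_s))$ for every $f \in A$, and a block-diagonal matrix of this shape automatically has a block point at every block boundary, in particular at $k$.

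For the reverse direction, given $x \notin B_{i,k}$ (so $k > 1$), I would construct $f \in A$ whose $i$th summand at $x$ has no block point at $k$, using the all-ones matrix $J \in M_{n_i}$ as a universal witness (note $J$ has no block point at any position in $\{2,\ldots,n_i\}$). The argument splits into two cases. Case 1: if $x \in X_i \setminus Y_i$, by Urysohn pick $\eta \in C(X_i, [0,1])$ with $\eta(x) = 1$ and $\eta|_{Y_i} \equiv 0$, set $f_j = 0$ for $j < i$ and $f_i = \eta \cdot J$, then extend inductively to higher levels $j > i$ by Tietze's theorem, producing $f \in A$ with $f_i(x) = J$. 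Case 2: if $x \in Y_i$, the reduction already noted in the paper lets me assume each corresponding point $x_r$ lies in $X_{i_r}\setminus Y_{i_r}$; since $k$ is not a block boundary, it must lie strictly inside some block $j$, and I would apply Case 1 to $(i_j, x_j)$ to force the $j$th diagonal block of $f_i(x)$ to be the all-ones matrix in $M_{n_{i_j}}$, which lacks a block point at the required internal position.

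Closedness of $B_{i,k}$ then follows immediately from the characterization: for each $f \in A$ the set $\{x \in X_i : f_i(x) \text{ has a block point at } k\}$ is the common zero set of the finitely many continuous matrix entries that must vanish, hence closed, and $B_{i,k}$ is the intersection of these over all $f \in A$. The main obstacle I expect is the telescoping Tietze extension in Case 1: one must check that a partial tuple $(f_1,\ldots,f_i)$ satisfying the compatibility constraints through level $i$ genuinely extends to a full element of $A = A_l$. This is routine because each $\phi_{j-1}$ produces continuous matrix-valued functions on $Y_j$, which Tietze extends to $X_j$; but the bookkeeping has to be carried out at each step to confirm that the final tuple indeed lies in $A$.
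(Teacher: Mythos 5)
Your proof is correct and follows essentially the same approach as the paper's: a definitional forward direction, a reverse direction that constructs a witnessing function supported off $Y_i$ in the base case and reduces the $x \in Y_i$ case to the base case via the block structure, and closedness as an intersection of zero sets of continuous matrix entries. You simply make explicit the details the paper elides (the all-ones witness, the Urysohn bump, and the level-by-level Tietze extension showing the partial tuple extends to an element of $A$).
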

\begin{proof} It is clear that if $x\in B_{i,k}$, then every $f_i(x)$ has a block point at position $k$ for every $f$.
Suppose $x\in X_i$ does not lie in $B_{i,k}$. If $x\notin Y_i$, then because $Y_i$ is closed, we may easily find a function $f\in A$ for which $f_i(x)$ does not have a block point at $k$. Simply let $f_{i^\prime}$ be $0$ on $X_{i^\prime}$ for all $i^\prime < i$ and choose an appropriate function supported on $X_i \setminus Y_i$. Otherwise $x\in Y_i$ where we assume the that $k$ lies in the middle of some block beginning at $k^\prime <k$, so there exists a point $x^\prime \in X_{i^\prime}$ such that for every $f\in A$, $f_i(x)$ has a block point at position $k-k^\prime$, where $x^\prime \notin B_{i^\prime, k-k^\prime}$. We may therefore always reduce this to the case that $x\notin Y_i$.
Since the set of points $x\in X_i$ at which any $f_i(x)$ has blockpoints at position $k$ is evidently closed, it follows that $B_{i,k}$ is closed.\end{proof}

Consider quotients of DSH algebras. Suppose that $A$ is a DSH algebra and $\psi: A \to B$ is a surjective homomorphism. We define $\widehat{\psi}$ to be the injective, single-valued map from $\widehat{B} \to \widehat{A}$, the spectra of equivalence classes of irreducible representations of $B$ and $A$. For each $i$, we have that the subset of $\widehat{A}$ of representations of dimension $n_i$ is homeomorphic to $X_i\setminus Y_i$. Then for $i=1,\ldots,l$,  define $X_i^\prime$ to be the closure of $X_i \bigcap \widehat{\psi}(\widehat{B})$, and $Y_i^\prime$ to be $X_i^\prime \bigcap Y_i$. Since $\widehat{B}$ is compact, it follows that $\widehat{\psi}(\widehat{B})$ is closed, and therefore if $y\in Y_i^\prime$, each corresponding point $x_1,\ldots,x_t$ in the diagonal decomposition of $\phi_{i-1}$ lies in some $X_j^\prime$ for $j<i$. Define $A(1)^\prime$ to be $C(X_1^\prime, M_{n_1})$, and define a diagonal homomorphism $\phi_1^\prime: A(1)^\prime \to C(Y_2^\prime, M_{n_2})$ using the same diagonal decomposition as $\phi_1$ on $Y_i$ restricted to $Y_i^\prime$. Define $A(2)^\prime$ to be $ A(1)^\prime \oplus_{C(Y_2^\prime, M_{n_2})} C(X_2^\prime, M_{n_2})$. Similarly, for $i=2,\ldots,l-1$, we define $\phi_i^\prime: A(i)^\prime \to C(Y_{i+1}^\prime, M_{n_{i+1}})$ via the restriction of the diagonal decomposition of $\phi_i$ to $Y_{i+1}^\prime$ and $A(i+1)^\prime$ as $ A(i)^\prime \oplus_{C(Y_{i+1}^\prime, M_{n_{i+1}})} C(X_{i+1}^\prime, M_{n_{i+1}})$.

\begin{lemma} Every quotient of a DSH algebra is DSH.\label{quotient}\end{lemma}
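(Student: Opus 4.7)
The preceding paragraph already constructs the candidate DSH algebra $A(l)^\prime$; the plan is to verify that it is genuinely DSH and that it is isomorphic to $B$. Granting the setup there, the only condition in the DSH definition that requires work is that each $\phi_i^\prime$ is diagonal in the appropriate sense, i.e., that for every $y \in Y_{i+1}^\prime$ the corresponding diagonal points $x_1, \ldots, x_t$ lie in the respective $X_{i_j}^\prime$. I expect this to be the main obstacle. Using the reduction noted just after the DSH definition, I may assume each $x_k \in X_{i_k} \setminus Y_{i_k}$, so that each corresponds to an irreducible representation $\pi_{x_k}$ of $A$. Writing $y$ as a limit of points $y_n \in X_{i+1} \cap \widehat{\psi}(\widehat{B})$, each $\pi_{y_n}$ factors through $B$, so $\pi_{y_n}(a)=0$ for every $a \in \ker \psi$. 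By continuity, the direct sum representation $\pi_y = \bigoplus_k \pi_{x_k}$ also vanishes on $\ker \psi$, whence each $\pi_{x_k}$ does, giving $x_k \in \widehat{\psi}(\widehat{B}) \cap X_{i_k} \subseteq X_{i_k}^\prime$.

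Next I would exhibit an isomorphism $B \cong A(l)^\prime$ by way of the restriction map $\pi: A \to A(l)^\prime$ sending $f \mapsto (f_1|_{X_1^\prime}, \ldots, f_l|_{X_l^\prime})$. The compatibility conditions defining $A(i+1)^\prime$ on $Y_{i+1}^\prime$ are inherited from those of $A_{i+1}$ on $Y_{i+1}$, so $\pi$ is a well-defined $*$-homomorphism into $A(l)^\prime$. Identifying $\widehat{A}$ with $\bigsqcup_j (X_j \setminus Y_j)$, the kernel $\ker \pi$ consists of those $f$ vanishing on $X_j \cap \widehat{\psi}(\widehat{B})$ for every $j$, which is exactly $\ker \psi$ since the irreducible representations of $A$ factoring through $B$ are precisely evaluation at the points of $\widehat{\psi}(\widehat{B})$.

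Finally I would prove $\pi$ is surjective by induction on $l$. The base case $l=1$ is Tietze extension for matrix-valued functions. For the inductive step, given $(g, h) \in A(i+1)^\prime$, lift $g$ to some $a \in A_i$ using the inductive hypothesis. Then $\phi_i(a) \in C(Y_{i+1}, M_{n_{i+1}})$ and $h \in C(X_{i+1}^\prime, M_{n_{i+1}})$ agree on their intersection $Y_{i+1}^\prime$ by the compatibility built into $(g,h)$ together with the definition of $\phi_i^\prime$, so they determine a continuous function on the closed subset $Y_{i+1} \cup X_{i+1}^\prime$ of $X_{i+1}$, which extends by Tietze to a preimage in $C(X_{i+1}, M_{n_{i+1}})$. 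Combining surjectivity with the kernel identification gives $B \cong A/\ker\psi = A/\ker\pi \cong A(l)^\prime$, proving $B$ is DSH.
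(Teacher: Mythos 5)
Your proposal is correct, and it is in essence the same isomorphism the paper uses: the paper defines $\theta: B \to A(l)'$ by $\theta(f)_i(x) = \tilde f_i(x)$ for any preimage $\tilde f$ of $f$, and this is precisely the map induced on $B = A/\ker\psi$ by your restriction map $\pi: A \to A(l)'$. The difference is in the level of detail. The paper dispatches the whole verification with "we can then check that this is a homomorphism and a bijection," whereas you actually carry out the two nontrivial parts: identifying $\ker\pi$ with $\ker\psi$ (which gives injectivity and well-definedness of $\theta$), and proving surjectivity by an inductive Tietze extension up the composition sequence. That surjectivity step is genuinely the content the paper elides, and your handling of it is correct — the lifted $\phi_i(a)$ and $h$ agree on $Y_{i+1}\cap X_{i+1}' = Y_{i+1}'$ exactly because of the compatibility built into $A(i+1)'$, so they glue and extend. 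Your first paragraph, showing the diagonal points of $y\in Y_{i+1}'$ land in the appropriate $X_j'$, reproves a fact the paper disposes of in the paragraph preceding the lemma by appealing to closedness of $\widehat{\psi}(\widehat{B})$; your version via vanishing on $\ker\psi$ is an equivalent and perhaps more transparent way to see it. In short: same route, but you supply the argument where the paper merely asserts.
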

\begin{proof}We prove that the DSH algebra $A(l)^\prime$ is isomorphic to $B$. For each $\pi\in \widehat{B}$, we choose a unique representation $R[\pi]$ from the equivalence class $\pi$ such that for every $f\in B$, $R[\pi](f)= \tilde{f}_i(\widehat{\psi}(\pi))$ where $\tilde{f}$ is any preimage of $f$ and $\widehat{\psi}(\pi)\in X_i$. Then for each element of $f\in B$ we can define an element $\theta(f)\in A(l)^\prime$ by $\theta(f)_i(x)= R[\widehat{\psi}^{-1}(x)](f)$. We can then check that this is a homomorphism and a bijection.\end{proof}

When discussing a sequence $\mathrm{lim}(A_j,\phi_j)$ of DSH algebras, we will identify the spaces in the composition sequence of $A_j$ as $X_i^j$ and the dimensions as $n_i^j$ with subspaces $Y_i^k$ and $B_{i,k}^j$. 

We say that a map $\phi_{j_2,j_1}:A_{j_1}\to A_{j_2}$  between DSH algebras is diagonal if for every $x\in X_{i}^{j_2}$ there are points $x_1,\ldots,x_t$, where $x_k\in X_{i_k}^{j_1}$ such that for every $f\in A_{j_1}$, $\phi_{j_2,j_1}(f)_i(x)=\mathrm{diag}(f_{i_1}(x_1),\ldots,f_{i_t}(x_t))$.

Dynamical systems yield the motivating examples of DSH algebras. Let $X$ be an infinite compact Hausdorff space and let $\sigma: X \to X$ be a minimal homeomorphism. We denote also by $\sigma$ the automorphism of $C(X)$ given by $\sigma(f)= f \circ \sigma^{-1}$. We let $u$ be a unitary such that $ufu^*=\sigma(f)$.

Let $Y\subseteq X$ be the closure of an open set in $X$ and consider the algebra  $A_Y$ generated by $ \{ f, ug: g(x)\vert_Y=0   \}$. By unpublished results of Q. Lin, this algebra is subhomogeneous and can be described explicitly as follows.

For $y\in Y$, write $R (y)= \min \{n>0: \sigma^n(y)\in Y\}$. This is the first return time for $y$. By compactness $R(Y)$ is a finite set, and we list its values as $n_1\ldots, n_l$. Moreover, we write $X_i = \overline{R^{-1}(n_i)}$ for $i=1,\ldots, l$. Then $A_Y$ can be written as a subalgebra of $\bigoplus_{i=1}^l M_{n_i}(C(X_i))$.

Define $Y_i=X_i \setminus R^{-1}(n_i)$. Whenever $y\in Y_i$, there exists ${t_1}, \ldots {t_s}$ and $y^\prime \in X_{t_1}$ such that $n_{t_1}+\cdots + n_{t_s} = n_i$ and $\sigma^{n_{t_1} +\cdots +n_{t_j}}(y^\prime)\in Y$ for $1\leq j\leq s$. In this case when $f\in A_Y$, $f_i(y) = \mathrm{diag}( f_{t_1}(y^\prime), f_{t_2}(\sigma^{n_{t_1}}(y^\prime)),\ldots, f_{t_s}(\sigma^{n_{t_1}+\cdots + n_{t_{s-1}}}(y^\prime)))$. The algebra $A_Y$ is a DSH algebra with a composition sequence of length $l$, spaces $X_i$, restriction subspaces $Y_i$, and dimensions $n_i$.

For $x\in X$, we write $A_x$ rather than $A_{\{x\}}$. $A_x$ is simple and, by \cite{archey}, is a centrally large subalgebra of the crossed product. We will show $A_x$ has stable rank one. 

It is clear from the generators that if $Z\subseteq Y$ then $A_Y \subseteq A_Z$ and we let $\phi$ be the natural embedding.
 \begin{lemma} $\phi$ is a diagonal map between DSH algebras.\label{decomp}\end{lemma}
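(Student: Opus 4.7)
My plan is to unpack the orbit-breaking representations of $A_Y$ and $A_Z$ at a tower base and exhibit the required block-diagonal structure directly, reading it off from the visits of the $\sigma$-orbit to $Y$. I write $X_i$, $Y_i$, $n_i$ ($i=1,\ldots,l$) for the spaces, restriction subspaces, and dimensions of the DSH decomposition of $A_Y$ constructed above, and $\tilde{X}_j$, $\tilde{Y}_j$, $\tilde{n}_j$ ($j=1,\ldots,\tilde{l}$) for those of $A_Z$, so that $\tilde{X}_j = \overline{R_Z^{-1}(\tilde{n}_j)}$. Fix $z \in \tilde{X}_j$, and first treat the interior case $R_Z(z) = \tilde{n}_j$. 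Since $z \in Z \subseteq Y$ and $\sigma^{\tilde{n}_j}(z) \in Z \subseteq Y$, the orbit segment $z, \sigma(z), \ldots, \sigma^{\tilde{n}_j-1}(z)$ visits $Y$ at some indices $0 = k_0 < k_1 < \cdots < k_{t-1} < k_t = \tilde{n}_j$. Each $\sigma^{k_s}(z) \in Y$ is then a tower base for $Y$ whose first return to $Y$ occurs at time $k_{s+1}-k_s$, so $\sigma^{k_s}(z) \in X_{i_{s+1}}$ for the index $i_{s+1}$ with $n_{i_{s+1}} = k_{s+1}-k_s$.

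Next I verify that for every $f \in A_Y$,
$$\phi(f)_j(z) = \mathrm{diag}\bigl(f_{i_1}(z),\ f_{i_2}(\sigma^{k_1}(z)),\ \ldots,\ f_{i_t}(\sigma^{k_{t-1}}(z))\bigr),$$
by checking on the two kinds of generators of $A_Y$: functions $f \in C(X)$, and elements $uh$ with $h|_Y = 0$. For $f \in C(X)$ both sides equal $\mathrm{diag}(f(z), f(\sigma(z)), \ldots, f(\sigma^{\tilde{n}_j-1}(z)))$, cut into consecutive diagonal blocks of sizes $k_{s+1}-k_s$. For $uh$, the matrix $\phi(uh)_j(z)$ computed inside $A_Z$ is a restricted shift supported on a single off-diagonal; its entry at transition index $k$ is (up to indexing convention) a value of $h$ on $\sigma^k(z)$, so because $h$ vanishes on all of $Y$ the transition entries at exactly the indices $k = k_s$ for $s=0,\ldots,t-1$ are zero. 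This is precisely what splits the matrix into blocks of sizes $k_1-k_0, k_2-k_1, \ldots, k_t-k_{t-1}$, and each block agrees with $(uh)_{i_{s+1}}(\sigma^{k_s}(z))$ by the same computation carried out inside $A_Y$.

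For boundary points $z \in \tilde{Y}_j$ I will invoke the DSH decomposition of $A_Z$ itself, which supplies points $z_1', \ldots, z_s'$ in strictly lower-indexed spaces $\tilde{X}_{j_r}$ over which elements of $A_Z$ evaluated at $z$ are block-diagonal; I then recursively apply the interior argument above to each $z_r'$, using the observation following the definition of DSH to push the reduction until every point lands in the interior of its space. The recursion terminates because each step strictly reduces the index. The main obstacle is purely the combinatorial bookkeeping of lining up the visits of the $\sigma$-orbit through $z$ to $Y$ with the tower-base indices of $A_Y$, and threading this cleanly through the recursive boundary case; once the explicit form of the restricted-shift matrix encoding the generator $uh$ is granted, the verification reduces to what I have sketched.
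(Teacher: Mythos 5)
Your proof follows essentially the same route as the paper's: reduce to tower-base points with the full return time $q_i$, track the indices at which the orbit segment visits $Y$, and verify the block-diagonal identity on the two kinds of generators ($C(X)$ and $uh$ with $h|_Y=0$) by observing that the subdiagonal entries of the shift vanish precisely at those transition indices. The only differences are an inessential index shift (you evaluate at $z,\sigma(z),\ldots,\sigma^{q-1}(z)$ where the paper uses $\sigma(z),\ldots,\sigma^{q}(z)$) and a more explicit unwinding of the boundary-point recursion, which the paper handles by remarking that points in $Z_i\setminus R_Z^{-1}(q_i)$ already lie in some $Z_j$ with $j<i$.
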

\begin{proof} We will give an explicit diagonal description of $\phi$. We define the return time of a point $z\in Z$ to $Z$ as $R_Z(z)$. $Z$ decomposes into sets $Z_i=\overline{R_Z^{-1}(q_i)}$ where $q_1,\ldots,q_w$ is the list of possible return times. Since every point in $Z_i\setminus R_Z^{-1}(q_i)$ lies in $Z_j$ for some $j<i$, it suffices to describe $\phi(f)$ at an arbitrary point in $R_Z^{-1}(q_i)$.
Suppose $z\in R_Z^{-1}(q_i)$. There exist $n_{t_1}, \ldots n_{t_s}$ such that $n_{t_1}+\cdots + n_{t_s} = q_i$ and when $1\leq k\leq q_i$, $\sigma^k(z)\in Y$ if and only if $k=n_{t_1} +\cdots +n_{t_j}$ for some $1\leq j\leq s$. Let $S_j=n_{t_1} +\cdots +n_{t_j}$ and assume $\sigma^{S_j}(z)\in X_{i_j}$. Then for $f\in A_Y$, we will show
\begin{equation}
\phi(f)_i(z) = \mathrm{diag}( f_{i_0}(z),f_{i_1}(\sigma^{n_{t_1}}(z)),\ldots, f_{i_{s-1}}(\sigma^{S_{s-1}}(z))).\label{embed}
\end{equation}
 Since $\phi$ is evidently a diagonal homomorphism, it suffices to prove \eqref{embed} for the generators $f$ and $ug$. Suppose $f\in C(X)$, and denote by $\iota_Y$ the function on $\bigsqcup X_i$ in our presentation of $A_Y$ and similarly with $\iota_Z$. We have that $\iota_Y(f)= \bigoplus_{i=1}^{l} \mathrm{diag}(f\circ\sigma, \ldots, f\circ\sigma^{n_i})$. Suppose then that for some $j$, $\sigma^j(z)\in X_k$. Then $\iota_Y(f)_k(\sigma^j(z))=\mathrm{diag}(f\circ\sigma^{j+1}, \ldots, f\circ\sigma^{j+n_k})$.
We may decompose the sequence $1,\ldots, q_i$ into $1, \ldots, n_{t_1}$ followed by $S_j+1,\ldots ,S_j+n_{t_{j+1}}$ for $j=1,\ldots, s-1$. For each of these subsequences  
$$\mathrm{diag}(f(\sigma^{S_j+1}(z)), \ldots, f(\sigma^{S_j+n_{t_{j+1}}}(z)))=\iota_Y(f)_{i_j}(\sigma^{S_j}(z)).$$
We then have that 
$$\phi(\iota_Y(f))_{i}(z) = \mathrm{diag}( \iota_Y(f)_{i_0}(z),\ldots, \iota_Y(f)_{i_{s-1}}(\sigma^{S_{s-1}}(z)))$$
\begin{align*}	
&= \mathrm{diag}( f\circ\sigma(z), \ldots, f\circ\sigma^{n_{t_1}}(z),\ldots, f\circ\sigma^{(S_{s-1}+1}(z), \ldots, f\circ\sigma^{S_s}(z))\\
&=\mathrm{diag}(f\circ\sigma(z), \ldots, f\circ\sigma^{q_i}(z))\\
&= \iota_Z(f)_i(z).
	\end{align*}

Suppose $g\in C(X)$ and $g(y)=0$ for all $y\in Y$. Then $ug\in A_Y\subseteq A_Z$ and we denote its two presentations by $\iota_Y(ug)$ and $\iota_Z(ug)$. For $z\in Z_i$,

$$  \iota_Z(ug)_i(z)=\begin{bmatrix}
	0 & & & \\
	g\circ\sigma(z) & 0 & & \\
	& \ddots & \ddots & \\
	& & g\circ\sigma^{q_i-1}(z) & 0\\
\end{bmatrix}.$$

Moreover, since $\sigma^{S_j}(z)\in X_{i_j}$ for $0\leq j\leq s$, $ug\circ\sigma^{S_j}(z)=0$. Thus $\iota_Z(ug)_i(z)$ can be decomposed into $s$ diagonal blocks of size $n_{t_1},\ldots, n_{t_{s}}$. Each such block can be identified with a value of $\iota_Y(ug)_{i_j}(\sigma^{S_j}(z))$ in our presentation of $A_Y$ via the equation

$$\iota_Y(ug)_{i_j}(\sigma^{S_j}(z)) =
\begin{bmatrix}
	0 & & & \\
	g\circ\sigma^{{S_j}+1}(z) & 0 & & \\
	& \ddots & \ddots & \\
	& & g\circ\sigma^{S_{j+1}-1}(z) & 0\\
\end{bmatrix}. $$
Hence $\mathrm{diag}(\iota_Y(ug)_{i_0}(z),\ldots,\iota_Y(ug)_{i_{s-1}}(\sigma^{S_{s-1}}z))$ $=\iota_Z(ug)_k(z)= \phi(\iota_Y(ug))_k(z)$.\end{proof}

\section{Permutation unitaries}

As in \cite{ho}, given a permutation $\sigma \in S_n$, denote by $U[\sigma]$ the permutation matrix corresponding to $\sigma$. When $\sigma$ is a transposition, let $u_\sigma: [0,1] \to \mathcal{U}(M_n)$ be a continuous map such that

\begin{enumerate}
\item $u_\sigma(0)=1_n$,
\item $u_\sigma(1)= U[\sigma]$,
\item if either $i$ or $j$ is fixed by $\sigma$, then $u_\sigma(x)_{i,j} = \delta_{i,j}$ for all $x$,
\end{enumerate}
where $\delta_{i,j}$ is the Kronecker delta function.

We may choose $u_\sigma$ such that for every $t$, if $B=u_\sigma(t) A u_\sigma(t)^*$, then $B_{i,j}$ is a linear combination of the entries $A_{i,j},A_{\sigma(i),j},A_{i,\sigma(j)}$, and $A_{\sigma(i),\sigma(j)}$.

Indeed, we can choose functions $g_1, g_2, g_3, g_4: [0,1]\to \mathbb{C}$ such that for any $k_1<k_2$, $u_{(k_1\;k_2)}(t)_{i,j}= \delta_{i,j}$ for all $t$ if either $i$ or $j$ is not in $\{k_1,k_2\}$ and $u_{(k_1\;k_2)}(t)_{k_1,k_1}=g_1(t)$,$u_{(k_1\;k_2)}(t)_{k_1,k_2}=g_2(t)$,$u_{(k_1\;k_2)}(t)_{k_2,k_1}=g_3(t)$,$u_{(k_1\;k_2)}(t)_{k_2,k_2}=g_4(t)$. By defining $u$ in this way we have that whenever $k_3>k_2>k_1$, we have 
\begin{equation}U[(k_2\; k_3)] u_{(k_1\;k_2)}(t)U[(k_2\; k_3)]= u_{(k_1\;k_3)}(t).\label{conj}\end{equation}

\begin{dfn}We say an $n\times n$ matrix $A$ has a {\em zero cross} at position $k$ if $A_{k,j}=0$ and $A_{i,k}=0$ for all $i,j$.\end{dfn}

\begin{lemma} Suppose that $A$ is an $n \times n$ matrix with zero crosses at distinct positions $z_1,\ldots, z_m$. Let $t_1,\ldots,t_m \in [0,1]$.

For some $k\in\{1,\ldots,n\}\setminus\{z_1,\ldots,z_m\}$, consider the matrix $$B=u_{(k\;z_m)}(t_m) \cdots u_{(k\;z_1)}(t_1) A u_{(k\;z_1)}(t_1)^* \cdots u_{(k\;z_m)}(t_m)^*.$$
\begin{enumerate}
\item If $i$ and $j$ are not in $\{ k, z_1,\ldots, z_m\}$ then $B_{i,j}=A_{i,j}$.
\item If $i$ is in $\{ k, z_1,\ldots, z_m\}$ and $j$ is not then $B_{i,j}$ is non-zero only if $A_{k,j}$ is non-zero.
\item If $j$ is in $\{ k, z_1,\ldots, z_m\}$ and $i$ is not then $B_{i,j}$ is non-zero only if $A_{i,k}$ is non-zero.
\item If any of $t_1,\ldots,t_m =1$ then $B$ has a zero cross at position $k$.
\end{enumerate}\label{permute}\end{lemma}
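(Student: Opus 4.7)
The plan is to prove parts (1)--(3) simultaneously by induction on $m$, and then handle (4) by locating the first index at which some $t_\ell$ equals one and tracking what happens from there. Throughout I will exploit the stated feature of $u_\sigma(t)$: for a transposition $\sigma$, conjugation by $u_\sigma(t)$ leaves rows and columns not in the support of $\sigma$ unchanged, and on the supported rows and columns it replaces each entry $A_{i,j}$ by a linear combination of the four entries $A_{i,j}$, $A_{\sigma(i),j}$, $A_{i,\sigma(j)}$, $A_{\sigma(i),\sigma(j)}$.

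For the inductive step of (1)--(3), write $C$ for the matrix obtained after the first $m-1$ conjugations, so that $B=u_{(k\;z_m)}(t_m)\,C\,u_{(k\;z_m)}(t_m)^*$, and suppose inductively that $C$ satisfies (1)--(3) with respect to $z_1,\ldots,z_{m-1}$. Part (1) is immediate: for $i,j\notin\{k,z_1,\ldots,z_m\}$ both indices are fixed by $(k\;z_m)$, so $B_{i,j}=C_{i,j}=A_{i,j}$. For (2) with $i\in\{k,z_m\}$ and $j\notin\{k,z_1,\ldots,z_m\}$, $B_{i,j}$ is a linear combination of $C_{k,j}$ and $C_{z_m,j}$: the first is non-zero only if $A_{k,j}$ is, by inductive (2), while the second equals $A_{z_m,j}=0$ by inductive (1) together with the zero cross of $A$ at $z_m$. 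The case $i=z_\ell$ with $\ell<m$ is untouched by the last conjugation and inductive (2) applies verbatim. Part (3) is symmetric.

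For (4), let $\ell_0$ be the smallest index with $t_{\ell_0}=1$. The $\ell_0$-th conjugation is then by the honest permutation matrix $U[(k\;z_{\ell_0})]$, which swaps rows and columns $k$ and $z_{\ell_0}$. I claim that row and column $z_{\ell_0}$ of $C^{(\ell_0-1)}$ are already zero: for any $j$, split into the subcases $j=k$, $j\in\{z_1,\ldots,z_{\ell_0-1}\}$, or $j$ outside that set, and apply either (3) (reducing to $A_{z_{\ell_0},k}=0$) or (1) (reducing to $A_{z_{\ell_0},j}=0$); the column is symmetric. The swap then transfers this zero row and column to position $k$, so $C^{(\ell_0)}$ has a zero cross at $k$.

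The main obstacle is to show that every subsequent conjugation by $u_{(k\;z_{s+1})}(t_{s+1})$ with $s\geq \ell_0$ preserves the zero cross at $k$, since such a conjugation a priori mixes row $k$ with row $z_{s+1}$ and could reintroduce non-zero entries. The inductive argument from above shows $C^{(s)}$ always satisfies (1)--(3) with respect to $z_1,\ldots,z_s$, and the same case analysis forces row and column $z_{s+1}$ of $C^{(s)}$ to be zero as well. Hence each of the four entries appearing in the linear combination that computes any entry of row or column $k$ in $B=C^{(s+1)}$ is zero, and the zero cross at $k$ survives all remaining conjugations.
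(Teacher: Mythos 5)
Your proof is correct and takes essentially the same approach as the paper: parts (1)--(3) by induction on $m$, using the fact that conjugation by $u_{(k\,z)}(t)$ only mixes rows/columns $k$ and $z$ while a zero cross at $z$ in $A$ kills the problematic cross-terms. The one structural difference is in part (4): you split it out of the main induction, locate the first index $\ell_0$ with $t_{\ell_0}=1$, and then propagate the zero cross at $k$ forward by noting that each intermediate matrix $C^{(s)}$ has a zero cross at $z_{s+1}$ (via the already-proved (1)--(3)). The paper instead folds (4) into the same induction, showing that the matrix after $m$ conjugations has a zero cross at $z_{m+1}$ and combining that with the inductive hypothesis for (4) to conclude the final conjugation preserves the zero cross at $k$. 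The two handlings of (4) are mathematically equivalent; yours is a bit more explicit about why the zero cross survives every subsequent conjugation, which some readers may find clearer, at the cost of a slightly longer argument.
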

\begin{proof}We do this by induction on $m$. Consider $u=u_{(k\;z)}(t)$ where $A$ has a zero cross at position $z$. If $i$ and $j$ are not in $\{ k,z\}$, then for $B= u A u^*$,
\begin{align*}
B_{i,j} &= \sum_{x=1}^n \sum_{y=1}^n u_{i,x} A_{x,y} u_{y,j}^*& \\
&= \sum_{x=1}^n \sum_{y=1}^n \delta_{i,x} A_{x,y} \delta_{y,j}\\
&= A_{i,j}.\\
\end{align*}
If $i\in \{k,m\}$ and $j$ is not, then 
\begin{align*}
B_{i,j} &= \sum_{x=1}^n \sum_{y=1}^n u_{i,x} A_{x,y} u_{y,j}^* & \\
& =u_{i,k}A_{k,j} + u_{i,z}A_{z,j}\\
&= u_{i,k}A_{k,j}\\
\end{align*}
since $A_{z,j}$ is zero. This establishes (2). The case wherein $j\in \{k,m\}$ and $i$ is not is similar. Finally, when $t=1$, $B_{i,j} = A_{\sigma(i),\sigma(j)}$. Hence we have $B_{i,j}=0$ whenever $i$ or $j$ is equal to $k$.

Suppose all four claims hold for a given $m$. Suppose $A$ has a zero cross at position $z_{m+1}$. We first show that  $B=u_{(k\;z_m)}(t_m) \cdots u_{(k\;z_1)}(t_1) A u_{(k\;z_1)}(t_1)^* \cdots u_{(k\;z_m)}(t_m)^*$ also has a zero cross at position $z_{m+1}$. By (1), $B_{i,z_{m+1}} = A_{i,z_{m+1}}=0$ whenever $i\notin \{ k, z_1,\ldots, z_m\}$. By (2), when $i\in \{ k, z_1,\ldots, z_m\}$, $B_{i,z_{m+1}} = 0$ because $A_{k,z_{m+1}} = 0$. Hence $B_{i,z_{m+1}} = 0$ for all $i$ and similarly $B_{z_{m+1},j} = 0$. 
Consider $B^\prime = u_{(k\;z_m)}(t_m)B u_{(k\;z_m)}(t_m)^*$. If $i,j \notin \{ k, z_1,\ldots, z_{m+1}\}$ then, as above, $B^\prime_{i,j}=B_{i,j}$. By assumption this is the same as $A_{i,j}$. Suppose $i$ is in $\{ k, z_1,\ldots, z_{m+1}\}$ and $j$ is not. If $i\in \{k,z_{m+1}\}$, then $B^\prime_{i,j}$ is non-zero only if $B_{i,k}$ is non-zero. If $i$ is in $\{ z_1,\ldots, z_{m}\}$, then $B^\prime_{i,j}=B_{i,j}$. In either case, $B^\prime_{i,j}$ is non-zero only if $A_{i,k}$ is non-zero. Claim (3) is similar.

Suppose at least one value in $\{t_1, \ldots, t_{m+1}\}$ is one. It is immediate that if $t_{m+1}=1$ then $B^\prime$ has a zero cross in position $k$. If any of $t_1, \ldots t_m$ is $1$, then by assumption $B$ has a zero cross at position $k$ as well as at $z_{m+1}$. It follows that $B^\prime$ has a zero cross in position $k$. \end{proof}

\begin{lemma}Suppose we have an $n\times n$ matrix $A$ and $\Delta\in [0,1]^n$ such that $\Delta_i>0$ only if $A$ has a zero cross at position $i$. Suppose that for some $k$, $M\leq n-k+1$, we have that $\Delta_i=1$ for some $i\in \{k,\ldots,k+M-1\}$. Then for $$V=u_{(k\;k+1)}(\Delta_{k+1}) \cdots u_{(k\;k+M-1)}(\Delta_{k+M-1}),$$
$V F V^*$ has a zero cross at position $k$.\label{block1}\end{lemma}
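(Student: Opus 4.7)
The plan is to reduce to Lemma \ref{permute} after trimming $V$, then to handle the one residual case where that lemma does not quite apply. First I would drop the trivial factors: any $j\in\{1,\ldots,M-1\}$ with $\Delta_{k+j}=0$ contributes $u_{(k\;k+j)}(0)=1_n$, so $V$ is unchanged upon deleting those factors. Enumerate the remaining positions as $z_1,\ldots,z_m$ (each greater than $k$, so $k\notin\{z_1,\ldots,z_m\}$) with parameters $t_i:=\Delta_{z_i}>0$; by the standing hypothesis on $\Delta$, $A$ has a zero cross at each $z_i$, so $V$ and $A$ fit precisely the setup of Lemma \ref{permute}.

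If some $\Delta_{k+j}=1$ with $j\geq 1$, then $t_i=1$ for some $i$ and Lemma \ref{permute}(4) immediately produces a zero cross of $VAV^*$ at position $k$. Otherwise the overall hypothesis forces $\Delta_k=1$, so $A$ itself already has a zero cross at $k$, and the task reduces to showing that this is preserved under conjugation by $V$. For that I would argue from the support of $V$: each factor $u_{(k\;z_i)}(t_i)$ is the identity outside rows and columns in $\{k,z_i\}$, and an easy induction on the product shows $V_{p,q}=\delta_{p,q}$ whenever either $p\notin S$ or $q\notin S$, where $S:=\{k,z_1,\ldots,z_m\}$. Consequently, for $i\in S$, the entry $V_{i,x}$ can be nonzero only when $x\in S$; but every entry $A_{x,y}$ with $x\in S$ vanishes because $A$ has a zero cross at each element of $S$. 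Expanding $(VAV^*)_{i,j}=\sum_{x,y}V_{i,x}A_{x,y}V^*_{y,j}$ then shows that the whole $i$-th row vanishes, and by the symmetric argument so does the $i$-th column, which in particular yields a zero cross at $k$.

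The substantive part of the proof is this last case, where condition (4) of Lemma \ref{permute} is unavailable and we are conjugating a matrix whose own zero cross at $k$ must be shown to survive; the crux is that $V$ acts as the identity off the block indexed by $S$, so the all-zero rows and columns of $A$ at the positions of $S$ are automatically preserved under conjugation by $V$.
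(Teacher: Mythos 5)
Your proof is correct and takes the same route as the paper: drop the trivial factors $u_{(k\;k+j)}(0)=1_n$, then appeal to Lemma~\ref{permute}. Your treatment is in fact more careful than the paper's in one respect. The paper handles the vacuous-product case (all $\Delta_{k+1},\ldots,\Delta_{k+M-1}$ vanish, forcing $\Delta_k=1$ so that $A=VAV^*$ already has the zero cross), and otherwise simply invokes Lemma~\ref{permute}(4). But part (4) requires some $t_i=1$, and nothing in the hypotheses guarantees this when the position with $\Delta_i=1$ is $i=k$ while $0<\Delta_{z_j}<1$ for every $z_j>k$; in that residual case the trimmed product is non-vacuous yet (4) does not apply. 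Your support computation — that an easy induction gives $V_{p,q}=\delta_{p,q}$ whenever $p\notin S$ or $q\notin S$ with $S=\{k,z_1,\ldots,z_m\}$, so that in $(VAV^*)_{i,j}=\sum_{x,y}V_{i,x}A_{x,y}V^*_{y,j}$ the only potentially surviving terms have $x\in S$ (for $i\in S$) or $y\in S$ (for $j\in S$), and $A$ vanishes on those rows and columns — covers exactly that case and thereby closes a small gap in the paper's argument. (Parts (1)--(3) of Lemma~\ref{permute} also show the off-$S$ entries in the $k$th row and column vanish, but your direct calculation is cleaner and handles the $S\times S$ entries too.)
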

\begin{proof} If $\Delta_j=0$ then $u_{(k\; j)}(\Delta_j)$ is the identity, thus we may rewrite 

\begin{align*}V&= u_{(k\;k+1)}(\Delta_{k+1}) \cdots u_{(k\;k+M-1)}(\Delta_{k+M-1})\\
&= u_{(k\;z_1)}(\Delta_{z_1}) \cdots u_{(k\;z_m)}(\Delta_{z_m})\\
\end{align*}
where $\Delta_{z_j}>0$ for each $j$ and thus $A$ has a zero cross in position $z_j$. If $\Delta_i=0$ for all $i>k$ then this product is vacuous. If this is the case then $\Delta_k=1$ and $A=VAV^*$ already has a zero cross in position $k$. Otherwise, apply Lemma~\ref{permute} (4) to conclude that $VAV^*$ has a zero cross in position $k$.\end{proof}

If $A_{i,j}=0$ whenever $\left|i-j\right|\geq r$, let us say $A$ has diagonal radius $r$. Denote the smallest integer such that this holds as $r(A)$.

\begin{lemma}Suppose $\Delta_i=1$ for some $i\in \{k,\ldots,k+M-1\}$ for $k=k_1, \ldots, k_N$  where $k_{j+1}-k_j\geq M$ and $k_N\leq n-M+1$. Set $$V_i=u_{(k_i\;k_i+1)}(\Delta_{k_i+1}) \cdots u_{(k_i\;k_i+M-1)}(\Delta_{k_i+M-1}).$$

For $j=0,\dots, N$, set $A(j)=V_{j}\cdots V_1 A V_1^* \cdots V_j^*$. Then $A(N)$ has zero crosses at $k_1, \ldots, k_N$ and $r(A(N))\leq r(A)+M-1$.\label{block2}\end{lemma}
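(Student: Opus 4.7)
The plan is an induction on $N$, treating the zero-cross claim first and then the diagonal-radius bound separately. For the zero-cross part, I would apply Lemma~\ref{block1} at each step: assuming inductively that $A(j-1)$ has zero crosses at $k_1,\ldots,k_{j-1}$, I want to invoke Lemma~\ref{block1} with matrix $A(j-1)$, central index $k=k_j$, and length $M$, producing a new zero cross at $k_j$ in $A(j)=V_j A(j-1) V_j^*$. The nontrivial input is that $\Delta_i>0$ forces $A(j-1)$ (not just $A$) to have a zero cross at $i$, for each $i\in[k_j,k_j+M-1]$. This follows from the hypothesis $k_{j+1}-k_j\geq M$: it forces the block $[k_j,k_j+M-1]$ to lie outside all preceding blocks $[k_s,k_s+M-1]$ for $s<j$, so iterated application of Lemma~\ref{permute}(1) shows the full row and column of $A$ at index $i$ are preserved through the conjugations $V_1,\ldots,V_{j-1}$. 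The same disjointness argument shows the already-established zero crosses at $k_1,\ldots,k_{j-1}$ persist through the further conjugation by $V_j$.

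For the radius bound, write $r=r(A)$ and let $\mathcal{B}_s=[k_s,k_s+M-1]$ denote the $s$-th block. I would bound $|i-j|$ for each non-zero entry $A(N)_{i,j}$ by tracing the entry backwards through the conjugations via Lemma~\ref{permute}. Because the blocks are disjoint, each $V_s$ interacts only with indices in $\mathcal{B}_s$: part (1) leaves an entry untouched when neither coordinate lies in $\mathcal{B}_s$, while parts (2) and (3) force any non-vanishing entry with exactly one coordinate in $\mathcal{B}_s$ to originate from the same row or column with that coordinate replaced by the central index $k_s$. Iterating reduces any non-zero $A(N)_{i,j}$ to a non-zero entry of the original $A$ obtained from $A_{i,j}$ by (possibly) replacing $i$ with $k_{s_1}$ when $i\in \mathcal{B}_{s_1}$ and $j$ with $k_{s_2}$ when $j\in \mathcal{B}_{s_2}$.

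From this reduction I expect to conclude $|i-j|\leq r+M-2$ in every case. The straightforward cases---neither coordinate in a block, both in the same block, or exactly one in some block---give this directly from $|i^\ast-j^\ast|\leq r-1$ together with $|i-i^\ast|,|j-j^\ast|\leq M-1$. The only case needing some care is when $i\in\mathcal{B}_{s_1}$ and $j\in\mathcal{B}_{s_2}$ with $s_1\neq s_2$, since a naive triangle inequality would give $r+2(M-1)$; however, $i$ and $j$ then both lie in the set $\mathcal{B}_{s_1}\cup\mathcal{B}_{s_2}$, whose diameter is exactly $|k_{s_1}-k_{s_2}|+(M-1)\leq (r-1)+(M-1)=r+M-2$. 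I expect the main friction to be purely bookkeeping---verifying at each invocation of Lemma~\ref{permute} that the intermediate matrix $A(s-1)$ has zero crosses at the $\Delta$-positive positions inside $\mathcal{B}_s$---but this is precisely what the first induction supplies.
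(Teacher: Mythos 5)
Your proposal is correct and follows essentially the same route as the paper: an induction using Lemma~\ref{block1} and the disjointness of the windows $K_j$ to produce the zero crosses, followed by tracing each entry of $A(N)$ back through the conjugations via Lemma~\ref{permute} and a short case analysis to obtain $r(A(N))\leq r(A)+M-1$. One small imprecision: to see that a zero cross at a position $i\notin K_s$ survives conjugation by $V_s$ you need Lemma~\ref{permute} parts (2) and (3) in addition to (1) --- the row and column at index $i$ are not literally preserved entry-by-entry, only their vanishing persists --- but this does not affect the argument.
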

\begin{proof}We now show that $A(N)$ has zero crosses at $k_1, \ldots, k_N$. First note that if $B$ has a zero cross at position $z$, and $z\notin \{k_i,\ldots,k_i+M-1\}$, then $V_iBV_i^*$ also has a zero cross at position $z$. This follows from Lemma~\ref{permute} (1),(2), and (3). For each $j$, define the set $K_j= \{k_j,\ldots,k_j+M-1\}$ and observe that since $k_{j+1}-k_j\geq M$, these sets are disjoint. Therefore, since $A$ has a zero cross at position $i$ for some $i\in K_j$, this is also true of $V_{j-1}\cdots V_1 F V_1^* \cdots V_{j-1}^*$. Therefore we may apply the previous lemma and conclude that $V_{j}\cdots V_1 F V_1^* \cdots V_{j}^*$ has zero crosses at positions $k_1,\ldots ,k_j$. Hence $A(N)$ has the desired property.

To bound the diagonal radius of $A(N)$, for each $s,t\in \{1,\ldots,n\}$ consider $A(j)_{s,t}$. If neither $s$ nor $t$ lies in $K_{j+1}$, then $A(j+1)_{s,t}=A(j)_{s,t}$. Since each of $s$ and $t$ lies in at most one set $K_j$, the sequence $A(1)_{s,t}$,\ldots, $A(N)_{s,t}$ takes on at most three values. We will show that if $\left|s-t\right|\geq r(A)+M$ then $A(N)_{s,t}=0$. 
When $\left|s-t\right|\geq r(A)+M\geq M$, $s$ and $t$ cannot lie in the same set $K_j$. Suppose that $s\in K_{j_s}$ and $t\in K_{j_t}$. $A(j_s)_{s,t}$ only differs from $A(j_{s-1})_{s,t}$ if $A(j_{s-1})$ has a zero cross at position $s$ or if $s=k_{j_s}$. In either case, $A(j_s)_{s,t}$ is non-zero only if $A(j_{s-1})_{k_{j_s},t}$ is also non-zero. It follows, then, that if $A_{s,t}, A_{k_s,t}, A_{s,k_t}$ and $A_{k_s,k_t}$ are all zero then $A(N)_{s,t}$ is also zero. If $s$ and $t$ differ by at least $r(A)+M-1$ then each of these co-ordinate pairs differs by at least $r(A)$, and hence all four terms are zero. The cases in which either $s$ or $t$ or both are not in any set $K_j$ are similar.\end{proof}

\begin{lemma}Suppose $A\in M_n$ has zero crosses at positions $z_1, \ldots, z_m$. Then there exists a unitary $V\in C([0,1],M_n)$ such that $V A V^*(1)$ has zero crosses at positions $1,\ldots, m$, $V(0)$ is the identity and for all $\theta\in [0,1]$, $r(V AV^*(\theta)) \leq r(A)+2$.\label{condense}\end{lemma}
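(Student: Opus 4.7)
The plan is to bubble-sort the zero crosses into position one at a time. After relabeling I assume $z_1 < z_2 < \cdots < z_m$. Phase $i$ of the path performs the $z_i - i$ adjacent swaps needed to drag the zero cross currently at position $z_i$ down to position $i$: the $k$-th step of phase $i$ is the path $t \mapsto u_{(z_i - k\;z_i - k + 1)}(t)$ composed on the left with the cumulative unitary produced so far, for $k = 1, \ldots, z_i - i$. The full $V$ is the reparametrization of the concatenation of these sub-paths onto $[0,1]$, and $V(0) = 1_n$ by construction.

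Correctness of the endpoint is a direct induction on $(i,k)$ using Lemma~\ref{permute}(4) with $m = 1$. Just before the $k$-th step of phase $i$, the current matrix has zero crosses precisely at $\{1, \ldots, i-1\} \cup \{z_i - k + 1, z_{i+1}, \ldots, z_m\}$: the first group was fixed by all subsequent swaps (which involve only indices $\ge i$), and the second was produced step-by-step starting from $\{z_i, z_{i+1}, \ldots, z_m\}$. Conjugation by the transposition $u_{(z_i - k\;z_i - k + 1)}(1)$ then moves the zero cross from $z_i - k + 1$ to $z_i - k$, so $V(1) A V(1)^*$ has zero crosses at $1, \ldots, m$.

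For the radius bound I would establish $r \leq r(A) + 1$ at every swap endpoint and $r \leq r(A)$ at each phase boundary. A direct computation shows that the cumulative permutation $\sigma$ at the end of phase $j$ sends each original non-zero-cross $a$ to $a + |\{l \leq j : z_l > a\}|$, an order-preserving map that contracts distances, giving $r \leq r(A)$ at each phase boundary. During phase $i$, after $k$ of its swaps the cumulative permutation is $\sigma$ composed with a partial cyclic shift on $\{z_i - k, \ldots, z_i\}$; for a pair $(a, b)$ of non-zero-crosses of $A$ with $|a - b| \leq r(A)$ the only way the partial shift can enlarge the image distance is for exactly one of the shifted non-zero-cross images to lie in the block $\{z_i - k, \ldots, z_i - 1\}$ while the other partner lies strictly to the left of the block. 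The partner cannot be one of the zero-cross indices $1, \ldots, i - 1$ (those are outside the support of $A$), so the partner lies in $\{i, \ldots, z_i - k - 1\}$, and an elementary calculation shows the image distance grows by exactly $1$. Finally, at an interior time $\tau \in (0, 1)$ of a single path $u_{(j\;j+1)}(\tau)$ acting on a matrix $B$ whose row and column $j + 1$ vanish, the entries of the conjugate in rows or columns from $\{j, j+1\}$ (with the other coordinate outside $\{j, j+1\}$) are scalar multiples of the $j$-th row or column of $B$, so non-zero entries of the conjugate appear only at distance at most $r(B) + 1$ from the diagonal. Combining these estimates gives $r(V A V^*(\theta)) \leq r(A) + 2$ throughout $[0, 1]$. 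I expect the main technical nuisance to be the partial-cyclic-shift distance bookkeeping just described; everything else follows directly from Lemma~\ref{permute} and the explicit form of the $u_\sigma$ paths from Section 2.
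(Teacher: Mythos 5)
Your proposal is correct and takes essentially the same approach as the paper: bubble-sort the zero crosses to the front using concatenated adjacent-transposition paths, with the radius estimate coming from (i) the completed-phase permutation not increasing $r$, (ii) the partially-completed cyclic shift contributing at most $+1$, and (iii) the single adjacent swap in progress contributing at most $+1$. The only cosmetic difference is that you drag the $i$-th cross to slot $i$ while the paper drags each $z_j$ to slot $1$ (via the cycles $(1\;2\cdots z_j)$, realized as adjacent-swap cascades $u^{1}_{z_j}$); since the two cumulative permutations agree on the non-cross positions, the radius bookkeeping is identical.
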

\begin{proof}For $0< i\leq j$, let $\delta_j^i: [0,1]\to [0,1]$ be a continuous increasing function with $\delta_j^i(\theta)=0$ for $\theta\leq\frac{i-1}{j}$ and $\delta_j^i(\theta)=1$ for $\theta\geq\frac{i}{j}$. 

Construct $u_j^i\in C([0,1],M_n)$ by 
$$u_j^i(\theta)=   u_{(i\;i+1)}\circ\delta_{j-i}^{j-i}(\theta)\cdots u_{(j-1\; j)}\circ\delta_{j-i}^{1}(\theta).$$
For any $\theta$, there exists at most one value $k$ such that $\delta_{j-i}^k(\theta)\in (0,1)$. Hence 
\begin{align*}
u_j^i(\theta)&=  u_{(j-k\; j-k-1)}(\delta_{j-i}^{k}(\theta))u_{(j-k+1\;j-k)}(1)\cdots u_{(j\; j-1)}(1)   \\
&=  u_{(j-k\; j-k-1)}(\delta_{j-i}^{k}(\theta))U[(j-k\;j-k+1  \cdots j)].\\
\end{align*}
Let us show that for any $A\in M_n$ with a zero cross at position $j$, $r(u_j^{i}(1)A u_j^{i}(1)^*)\leq r(A)+1$, and when $i=1$, $r(u_j^1(1)A u_j^{1}(1)^*)\leq r(A)$. 

Consider $B=u_j^{1}(1)A u_j^{1}(1)^*=U[(1\; 2  \cdots j)]A U[(1\; 2  \cdots j)]^*$. If $A_{s,t}$ is non-zero then neither $s$ nor $t$ is equal to $j$, and $A_{s,t}$ sits in one of four regions, each of which is either fixed (if $s,t>j$), shifted parallel to the diagonal (if $s,t < j$), or shifted toward the diagonal (if $s<j<t$ or $t<j<s)$. No non-zero entry in $A$ is shifted away from the diagonal; hence $r(B)\leq r(A)$.

The case where $i>1$ is similar except that if $i\leq s<j$ and $t<i$ or $i\leq t<j$ and $s<i$ then this possible non-zero entry is shifted by one away from the diagonal. Hence $r(u_j^{i}(1)A u_j^{i}(1)^*)\leq r(A)+1$.

For a fixed $k$ we let $B=U[(j-k\;j-k+1  \cdots j)]AU[(j-k\;j-k+1  \cdots j)]^* $ and we have just shown that $r(B)\leq r(A)+1$. We can rewrite $u_j^{i}(\theta)A u_j^{i}(\theta)^*$ as $$u_{(j-k\; j-k-1)}(\delta_{j-i}^{k}(\theta))B u_{(j-k\; j-k-1)}(\delta_{j-i}^{k}(\theta))^*$$
Let $C=u_{(j-k\; j-k-1)}(\delta_{j-i}^{k}(\theta))B u_{(j-k\; j-k-1)}(\delta_{j-i}^{k}(\theta))$. It then follows from Lemma \ref{block2} with $N=1$ and $M=2$ that $r(C)\leq r(B)+1$. Hence $r(u_j^i(\theta)A u_j^{i*}(\theta))\leq r(A)+2$.

Suppose then that we have $A\in M_n$ with zero crosses at positions $z_1, \ldots, z_m$. Set $V = u_{z_1}^1 \circ \delta_m^1 \cdots u_{z_m}^1 \circ \delta_m^m$. For each $t$, there exists a $k\in \{1,\ldots, m\}$ such that 

$$V(\theta)= u_{z_1}^1(1) \cdots u_{z_{k-1}}^1(1) u_{z_{k}}^1( \delta_m^k(\theta)).$$

Then 

$$r(u_{z_{k-1}}^1(1)\cdots u_{z_1}^1(1)  A u_{z_1}^1(1)^* \cdots u_{z_{k-1}}^1(1)^*)=r(A)$$
and after conjugating by $u_{z_{k}}^1( \delta_m^k(\theta))$ and applying the above estimate, we have $$r(VAV^*(\theta))\leq r(A)+2.$$\end{proof}

Fix $N<n$. For any $k\leq n$, denote by $\gamma_{k,n}$ the cycle $(k\; k+1\cdots  n)$. For any $k$ such that $N\leq k \leq n$, denote by $\eta_{k,n}$ the product of transpositions $(k-N+1\; n-N+1)\cdots (k\; n)$, which exchanges the $N$ entries up to and including $k$ with the final $N$ entries in $\{1,\ldots ,n\}$. We will use $u_{\eta_{k,n}}(\theta)$ to denote $u_{(k-N+1\; n-N+1)}(\theta)\cdots u_{(k\; n)}(\theta)$. Note that whenever $k\leq n-N$ these terms all commute and if $\theta=0$ then they are all trivial.

\begin{lemma}If $k\leq i-N$ and $i\leq n-N$ then 
\begin{equation}u_{\eta_{n,k}}(\theta)= U[\eta_{i,n}] u_{\eta_{i,k}}(\theta) U[\eta_{i,n}].\label{fullconj}\end{equation}\end{lemma}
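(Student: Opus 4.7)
The plan is to reduce the identity to a factor-by-factor application of the earlier conjugation identity \eqref{conj}, exploiting the fact that under the hypotheses the three relevant index windows are pairwise disjoint. I read $\eta_{i,k}$ and $\eta_{n,k}$ in the statement as denoting the same permutations as $\eta_{k,i}$ and $\eta_{k,n}$, since the defining products of transpositions are symmetric in the two index arguments.

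First I would observe that $k\le i-N$ and $i\le n-N$ force the three windows $W_k=\{k-N+1,\ldots,k\}$, $W_i=\{i-N+1,\ldots,i\}$, and $W_n=\{n-N+1,\ldots,n\}$ to be pairwise disjoint. Consequently,
$$u_{\eta_{i,k}}(\theta)=\prod_{j=0}^{N-1}u_{(k-N+1+j\; i-N+1+j)}(\theta),\qquad U[\eta_{i,n}]=\prod_{j=0}^{N-1}U[(i-N+1+j\; n-N+1+j)],$$
each consist of $N$ pairwise commuting factors (the defining transpositions have disjoint supports), and the same is true of the target expression $u_{\eta_{n,k}}(\theta)$.

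The key computation is the single-factor conjugation $U[\eta_{i,n}]\,u_{(k-N+1+j\; i-N+1+j)}(\theta)\,U[\eta_{i,n}]$ for a fixed $j$. Every factor $U[(i-N+1+j'\; n-N+1+j')]$ of $U[\eta_{i,n}]$ with $j'\ne j$ has both of its nontrivial indices outside the pair $\{k-N+1+j,\,i-N+1+j\}$, by the disjointness of the three windows; by property (3) of the paths $u_\sigma$ such a permutation matrix commutes with $u_{(k-N+1+j\; i-N+1+j)}(\theta)$, so these extraneous factors can be slid past and cancelled against their partners on the right. What survives is conjugation by the single transposition matrix $U[(i-N+1+j\; n-N+1+j)]$, and \eqref{conj}, applied with $k_1=k-N+1+j$, $k_2=i-N+1+j$, $k_3=n-N+1+j$ (so $k_1<k_2<k_3$), turns $u_{(k-N+1+j\; i-N+1+j)}(\theta)$ into $u_{(k-N+1+j\; n-N+1+j)}(\theta)$.

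Finally, since $\eta_{i,n}$ is an involution, $U[\eta_{i,n}]$ is self-adjoint, so conjugation by it is a $*$-homomorphism, which therefore distributes over the commuting product $u_{\eta_{i,k}}(\theta)=\prod_j u_{(k-N+1+j\; i-N+1+j)}(\theta)$. Assembling the factor-wise results yields $\prod_j u_{(k-N+1+j\; n-N+1+j)}(\theta)=u_{\eta_{n,k}}(\theta)$, which is the desired identity. The main obstacle is purely combinatorial bookkeeping: verifying the disjointness of the three windows and tracking which factors commute with which. No new analytic input beyond \eqref{conj} and the defining properties (1)--(3) of $u_\sigma$ is needed.
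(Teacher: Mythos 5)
Your proposal is correct and takes essentially the same approach as the paper: both exploit the pairwise disjointness of the three index windows to reduce the conjugation to a factor-by-factor application of Eq.~\eqref{conj}, differing only in bookkeeping (you conjugate each $u$-factor by the full $U[\eta_{i,n}]$ and cancel the commuting pieces, while the paper interleaves the commuting factors into pairs first). The reduction step, the use of disjointness/commutativity, and the invocation of \eqref{conj} are identical in substance.
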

\begin{proof} By definition 
	$$U[\eta_{i,n}] u_{\eta_{i,k}}(\theta) =\Big(\prod_{j=-(N-1)}^0 U[(i+j\; n+j)]\Big)\Big(\prod_{j=-(N-1)}^0 u_{\eta_{k+j \; i+j}}(\theta)\Big).$$
	Since $i\leq n$, $i+j$ cannot equal $n+j^\prime$ unless $j^\prime\leq j$. Since $k\leq i-N$, $k+j$ is never equal to $i+j^\prime$ or $n+j^\prime$ for any $j^\prime\in \{-(N-1),\ldots,0\}$. It follows that $u_{\eta_{(k+j\; i+j)}}$ commutes with every $U[(i+j^\prime\; n+j^\prime)]$ such that $j^\prime>j$. Hence 
	$$U[\eta_{i,n}] u_{\eta_{i,k}}(\theta) =\prod_{j=-(N-1)}^0 U[(i+j\; n+j)]u_{\eta_{k+j \; i+j}}(\theta)$$
	Since $i\leq n-N$, all the terms $U[(i+j\; n+j)]$ commute with one another. We therefore have 
	$$U[\eta_{i,n}] u_{\eta_{i,k}}(\theta)U[\eta_{i,n}] =\prod_{j=-(N-1)}^0 U[(i+j\; n+j)]u_{\eta_{k+j \; i+j}}(\theta)U[(i+j\; n+j)].$$
	Then by Eq.\eqref{conj} we have 
	$$U[\eta_{i,n}] u_{\eta_{i,k}}(\theta)U[\eta_{i,n}] =\prod_{j=-(N-1)}^0 u_{\eta_{k+j \; n+j}}(\theta)=u_{\eta_{n,k}}(\theta).$$\end{proof}

The following calculation is elementary.
\begin{lemma}When $i\leq n-N$, we have
$$U[\gamma_{1,n}]^N U[\eta_{i-1,n}]=U[\gamma_{1,i-1}]^N U[\gamma_{i,n}]^N.$$
\end{lemma}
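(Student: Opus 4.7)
The plan is to verify the identity as an equality of permutations in $S_n$, since both sides are permutation matrices and the map $\sigma \mapsto U[\sigma]$ is a homomorphism. So we need to check that $\gamma_{1,n}^N \circ \eta_{i-1,n}$ and $\gamma_{1,i-1}^N \circ \gamma_{i,n}^N$ agree on every $j \in \{1,\ldots,n\}$. The hypothesis $i \leq n - N$ (together with the implicit $i - 1 \geq N$ from $\eta_{i-1,n}$ being defined) guarantees that the four index ranges in the analysis below are disjoint and non-degenerate.

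First I would partition $\{1,\ldots,n\}$ into the four natural blocks $\{1,\ldots,i-N-1\}$, $\{i-N,\ldots,i-1\}$, $\{i,\ldots,n-N\}$, and $\{n-N+1,\ldots,n\}$, and compute the action of each side on each block. On the first and third blocks, $\eta_{i-1,n}$ is the identity and the image of $\gamma_{1,n}^N$ stays within $\{1,\ldots,n-1\}$, so both sides simply send $j$ to $j+N$. On the second block $\{i-N,\ldots,i-1\}$, the left side first moves $j$ to $j + (n-i+1) \in \{n-N+1,\ldots,n\}$ via $\eta_{i-1,n}$ and then wraps by $N$ under $\gamma_{1,n}^N$, landing at $j - i + 1 + N \in \{1,\ldots,N\}$; the right side leaves $j$ fixed under $\gamma_{i,n}^N$ and then wraps $j$ by $N$ inside $\{1,\ldots,i-1\}$, producing the same output. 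Similarly, on the fourth block $\{n-N+1,\ldots,n\}$, the left side pulls $j$ back to $\{i-N,\ldots,i-1\}$ and then shifts by $N$, landing in $\{i,\ldots,i+N-1\}$; the right side wraps $j$ inside $\{i,\ldots,n\}$ to the same element, and since $i \leq n-N$ this image avoids $\{1,\ldots,i-1\}$, so $\gamma_{1,i-1}^N$ fixes it.

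There is really no obstacle here; the only thing to be careful about is the bookkeeping in the two ``wrap-around'' cases, and this is precisely where the hypothesis $i \leq n-N$ is used to guarantee that the images of $\gamma_{1,i-1}^N$ and $\gamma_{i,n}^N$ land in the complementary ranges on which the other cycle acts trivially. Since the excerpt labels this as an elementary calculation, I would present the case analysis compactly as a single table of values rather than four separate paragraphs.
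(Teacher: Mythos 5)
Your argument is correct. The paper supplies no proof here — the lemma is simply introduced with ``the following calculation is elementary'' — so there is no proof of record to compare against; a direct permutation-level check of $\gamma_{1,n}^N\circ\eta_{i-1,n}=\gamma_{1,i-1}^N\circ\gamma_{i,n}^N$ on the four index blocks is exactly what that phrase is standing in for, and you carry it out correctly. You also correctly identify the two roles of the hypothesis $i\le n-N$: it keeps the blocks $\{1,\ldots,i-N-1\}$, $\{i-N,\ldots,i-1\}$, $\{i,\ldots,n-N\}$, $\{n-N+1,\ldots,n\}$ genuinely disjoint (the third nonempty), and it guarantees the cycle $\gamma_{i,n}$ has length $n-i+1\ge N+1$, so $\gamma_{i,n}^N$ wraps at most once and the arithmetic in the fourth block lands back in $\{i,\ldots,n\}$. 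One microscopic bookkeeping slip: on the third block $\{i,\ldots,n-N\}$ the image of $j$ under $\gamma_{1,n}^N$ can equal $n$ (when $j=n-N$), so the phrase ``stays within $\{1,\ldots,n-1\}$'' should read ``stays within $\{1,\ldots,n\}$, i.e.\ no wrap-around occurs''; this does not affect the computation, since all you need is $j+N\le n$.
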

We can now construct the final class of unitaries we require.
\begin{lemma}For some $n$, let $\Theta\in [0,1]^{(n)}$ and suppose the first entry is $1$, the final $N$ entries are all  zero and at most one of any consecutive $N$ entries is non-zero. We will write $\Theta_k$ to denote the $k$th entry of $\Theta$. Consider the unitary
$$ V_n(\Theta)= U[\gamma_{1,n}]^N\Big(\prod_{k=N}^{n-1} u_{\eta_{k,n}}(\Theta_{k+1}) \Big).$$
Suppose $\Theta_k =1$ for $k\in K=\{k_1,\ldots,k_m\}$ where $k_1=1$ and let $k_{m+1}=n+1$. Then
\begin{equation}\label{Vn}
	V_n(\Theta)= \mathrm{diag}( V_{k_2-k_1} (\Theta_{k_1},\ldots,\Theta_{k_2-1}),\ldots, V_{k_{m+1}-k_{m}}(\Theta_{k_{m}},\ldots,\Theta_{k_{m+1}-1})).\end{equation}\label{V}\end{lemma}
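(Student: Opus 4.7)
The plan is induction on $m$, peeling off the first block at each step. The base case $m=1$ is immediate. For the inductive step, factor
$$\prod_{k=N}^{n-1} u_{\eta_{k,n}}(\Theta_{k+1}) = P_1 \cdot U[\eta_{k_2-1,n}] \cdot P_2,$$
where $P_1 = \prod_{k=N}^{k_2-2} u_{\eta_{k,n}}(\Theta_{k+1})$, the middle factor arises because $\Theta_{k_2}=1$, and $P_2 = \prod_{k=k_2}^{n-1} u_{\eta_{k,n}}(\Theta_{k+1})$. The strategy is to commute $U[\eta_{k_2-1,n}]$ leftward past $P_1$ and then combine it with $U[\gamma_{1,n}]^N$ using the elementary calculation that immediately precedes the lemma being proved.

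The one-in-$N$ hypothesis together with $\Theta_{k_2}=1$ forces $\Theta_j=0$ for $j\in\{k_2-N+1,\dots,k_2-1\}\cup\{k_2+1,\dots,k_2+N-1\}$, so every factor of $P_1$ indexed by $k\in\{k_2-N,\dots,k_2-2\}$ and every factor of $P_2$ indexed by $k\in\{k_2,\dots,k_2+N-2\}$ is trivial. The surviving factors of $P_1$ therefore satisfy $k\le k_2-1-N$, and the lemma containing \eqref{fullconj}, applied with $i=k_2-1$, yields $u_{\eta_{k,n}}(\theta)=U[\eta_{k_2-1,n}]\,u_{\eta_{k,k_2-1}}(\theta)\,U[\eta_{k_2-1,n}]$ for each such $k$. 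Using $U[\eta_{k_2-1,n}]^2=1$ this gives $P_1\,U[\eta_{k_2-1,n}]=U[\eta_{k_2-1,n}]\,P_1'$, where $P_1':=\prod_{k=N}^{k_2-2} u_{\eta_{k,k_2-1}}(\Theta_{k+1})$ is supported on positions $\{1,\dots,k_2-1\}$. By the same spacing argument the surviving part of $P_2$ is supported on $\{k_2,\dots,n\}$.

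Applying the elementary calculation with $i=k_2$ (valid since $k_2\le k_m\le n-N$) gives $U[\gamma_{1,n}]^N\,U[\eta_{k_2-1,n}]=U[\gamma_{1,k_2-1}]^N\,U[\gamma_{k_2,n}]^N$, so
$$V_n(\Theta) = \bigl(U[\gamma_{1,k_2-1}]^N\,P_1'\bigr)\bigl(U[\gamma_{k_2,n}]^N\,P_2\bigr).$$
The two bracketed factors act on disjoint position sets, so they commute and form a block diagonal matrix. Direct inspection identifies the left block with $V_{k_2-1}(\Theta_1,\dots,\Theta_{k_2-1})$, and after the reindexing $j\mapsto j-k_2+1$ identifies the right block with $V_{n-k_2+1}(\Theta_{k_2},\dots,\Theta_n)$; the latter still satisfies all three hypotheses on $\Theta$ and carries $m-1$ ones, so the inductive hypothesis finishes the proof. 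The main obstacle is the bookkeeping: one must confirm that the one-in-$N$ spacing makes enough of $P_1$ and $P_2$ trivial so that the moving identity applies cleanly to every remaining factor, after which everything reduces to the two identities immediately preceding the lemma.
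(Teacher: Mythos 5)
Your proposal is correct and follows essentially the same route as the paper's proof: both peel off the block at $k_2$ by recognizing $u_{\eta_{k_2-1,n}}(\Theta_{k_2})=U[\eta_{k_2-1,n}]$, move it past the preceding factors via the lemma containing \eqref{fullconj} (using the one-in-$N$ spacing to reduce to $k\le k_2-1-N$), combine with $U[\gamma_{1,n}]^N$ via the elementary calculation, and then identify the two commuting blocks as smaller $V$'s. The only differences are cosmetic — you phrase the recursion as induction on $m$ and pre-factor the product into $P_1\,U[\eta_{k_2-1,n}]\,P_2$ rather than deriving the conjugation term-by-term as the paper does.
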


\begin{proof}If $\Theta_i=1$ then $i\leq n-N$. Note that for $k\in 1,\ldots,i-2$, $u_{\eta_{k,n}}(\Theta_{k+1})= U[\eta_{i,n}] u_{\eta_{k,i}}(\Theta_{k+1})U[\eta_{i,n}]$. This holds by Eq.~\eqref{fullconj} for $k\leq i-N$, and holds because $\Theta_{k+1}=0$ for $i-N\leq k<i-1$. We can then compute that 
$$\prod_{k=N}^{i-2} u_{\eta_{k,n}}(\Theta_{k+1}) =\prod_{k=N}^{i-2}U[\eta_{i-1,n}] u_{\eta_{k,i-1}}(\Theta_{k+1}) U[\eta_{i-1,n}]$$
$$=U[\eta_{i-1,n}]\Big(\prod_{k=N}^{i-2}  u_{\eta_{k,i-1}}(\Theta_{k+1})\Big) U[\eta_{i-1,n}]. $$
We multiply on the left by $U[\gamma_{1,n}]^N$ and apply the previous lemma.
$$U[\gamma_{1,n}]^N\Big(\prod_{k=N}^{i-2} u_{\eta_{k,n}}(\Theta_{k+1}) \Big)=U[\gamma_{1,i-1}]^N U[\gamma_{i,n}]^N\Big(\prod_{k=N}^{i-2}  u_{\eta_{k,i-1}}(\Theta_{k+1})\Big)U[\eta_{i-1,n}]$$
$$U[\gamma_{1,n}]^N\Big(\prod_{k=N}^{i-2} u_{\eta_{k,n}}(\Theta_{k+1}) \Big)=U[\gamma_{1,i-1}]^N \Big(\prod_{k=N}^{i-2}  u_{\eta_{k,i-1}}(\Theta_{k+1})\Big)U[\gamma_{i,n}]^N U[\eta_{i-1,n}]$$
Let $i=k_2$ and multiply on the right by $U[\eta_{k_2-1,n}]$.
$$ U[\gamma_{1,n}]^N\Big(\prod_{k=N}^{k_2-2} u_{\eta_{k,n}}(\Theta_{k+1})\Big)U[\eta_{k_2-1,n}]$$
$$=U[\gamma_{1,k_2-1}]^N\Big(\prod_{k=N}^{k_2-2} u_{\eta_{k,k_2-1}}(\Theta_{k+1})\Big) U[\gamma_{k_2,n}]^N U[\eta_{k_2-1,n}]U[\eta_{k_2-1,n}]$$
$$=U[\gamma_{1,k_2-1}]^N\Big(\prod_{k=N}^{k_2-2} u_{\eta_{k,k_2-1}}(\Theta_{k+1})\Big) U[\gamma_{k_2,n}]^N$$
Since $\Theta_{k_2}=1$, we can rewrite $V_n$.
$$V_n(\Theta)=U[\gamma_{1,n}]^N\Big(\prod_{k=1}^{k_2-2} u_{\eta_{k,n}}(\Theta_{k+1})\Big)U[\eta_{k_2-1,n}]\Big(\prod_{k=k_2}^{n-1} u_{\eta_{k,n}}(\Theta_{k+1})\Big)$$
$$=U[\gamma_{1,k_2-1}]^N\Big(\prod_{k=N}^{k_2-2} u_{\eta_{k,k_2-1}}(\Theta_{k_2+1}) \Big)U[\gamma_{k_2,n}]^N \Big(\prod_{k=k_2}^{n-1} u_{\eta_{k,n}}(\Theta_{k+1})\Big)$$
Using the fact that $\Theta_{k+1}=0$ for $k_2\leq k <k_2+N-1$ we have
$$V_n(\Theta)=U[\gamma_{1,k_2-1}]^N  \Big(\prod_{k=N}^{k_2-2} u_{\eta_{k,k_2-1}}(\Theta_{k+1})\Big)  U[\gamma_{k_2,n}]^N  \Big(\prod_{k=k_2+N-1}^{n} u_{\eta_{k,n}}(\Theta_{k+1})\Big) $$
$$=\mathrm{diag}(V_{k_2-k_1}(\Theta_{k_1},\ldots,\Theta_{k_2-1}),V_{k_{m+1}-k_2}(\Theta_{k_2},\ldots,\Theta_{k_{m+1}-1})).$$
We may then repeat this on $V_{k_{m+1}-k_2}(\Theta_{k_2},\ldots,\Theta_{k_{m+1}-1})$ using $k_3$ in place of $k_2$ and Eq.~\ref{Vn} follows.
\end{proof}

\begin{lemma}If $A$ is an $n\times n$ matrix with $r(A)\leq N$, $\Theta$ is as above, and whenever $\Theta_k>0$, $A$ has zero crosses at positions $k,\ldots, k+N-1$, then $AV_n $ is a strictly lower triangular matrix.\label{triangulate}\end{lemma}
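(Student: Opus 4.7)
The plan is to combine the block decomposition of $V_n(\Theta)$ from Lemma \ref{V} with a case-by-case analysis of the action of $V_n(\Theta)$ on the standard basis.

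First I would apply Lemma \ref{V} to write $V_n(\Theta)=\mathrm{diag}(V_{\ell_j}(\Theta^{(j)}))_{j=1}^m$, where $\Theta^{(j)}=(\Theta_{k_j},\ldots,\Theta_{k_{j+1}-1})$, $\ell_j=k_{j+1}-k_j$, and $k_1<\cdots<k_m$ are the positions at which $\Theta=1$ (with $k_{m+1}:=n+1$). I would then show that $A$ is block diagonal with respect to the same partition: if $i\in[k_j,k_{j+1}-1]$ and $i'\in[k_{j'},k_{j'+1}-1]$ with $j<j'$, then either $|i-i'|\geq N$, so $A_{i,i'}=0$ by $r(A)\leq N$, or $i'<i+N\leq k_{j+1}+N-1$, placing $i'$ in the zero-cross interval $[k_{j+1},k_{j+1}+N-1]$ of $A$ induced by $\Theta_{k_{j+1}}=1$; symmetrically $A_{i',i}=0$. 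Hence $AV_n(\Theta)$ is block diagonal with blocks $A^{(j)}V_{\ell_j}(\Theta^{(j)})$, and it suffices to show each of these is strictly lower triangular.

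This reduces to a single-block problem: an $\ell\times\ell$ matrix $A$ with $r(A)\leq N$ and zero crosses at $[k,k+N-1]$ for every $k$ with $\Theta_k>0$, together with a $\Theta\in[0,1]^\ell$ whose only entry equal to $1$ is $\Theta_1$ and whose other non-zero entries occur at positions $N+1\leq j_1<\cdots<j_p\leq\ell-N+1$ (the upper bound coming either from ``last $N$ entries zero'' in the final block, or from separation against the subsequent $\Theta_{k_{j+1}}=1$ for inner blocks). I would decompose $V=U[\gamma_{1,\ell}]^N W$ with $W=\prod_{s=1}^p u_{\eta_{j_s-1,\ell}}(\Theta_{j_s})$ and analyze the support of $Ve_j$ according to whether $j$ lies in some $[j_s-N,j_s-1]$, in $[\ell-N+1,\ell]$, or in neither. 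The key structural facts are that each factor $u_{\eta_{j_s-1,\ell}}(\theta)$ only mixes basis vectors indexed by $[j_s-N,j_s-1]$ with those indexed by $[\ell-N+1,\ell]$, and that the intervals $[j_s-N,j_s-1]$ are mutually disjoint and all lie in $[1,\ell-N]$. The goal is to show that the support of $Ve_j$ lies entirely in the zero-cross columns $[1,N]\cup\bigcup_s[j_s,j_s+N-1]$ of $A$ (so that $AVe_j=0$), or, in the generic case where $j$ misses all interesting intervals, that $Ve_j=e_{j+N}$ with $j+N\leq\ell$, in which case the diagonal radius bound places $Ae_{j+N}$ in rows $[j+1,j+2N-1]$, all strictly greater than $j$.

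The main obstacle is bookkeeping the support of $We_j$ when $e_j$ is itself affected by a non-trivial factor of $W$. For instance, if $j\in[j_s-N,j_s-1]$, I would argue that the factors applied in right-to-left order fix $e_j$ until $u_{\eta_{j_s-1,\ell}}$ acts, mixing $e_j$ with some $e_p\in[\ell-N+1,\ell]$. Each subsequent factor $u_{\eta_{j_{s'}-1,\ell}}$ for $s'<s$ further mixes $e_p$ with a new basis vector $e_q\in[j_{s'}-N,j_{s'}-1]$, but such an $e_q$ is then fixed by every remaining factor, since the intervals $[j_{s''}-N,j_{s''}-1]$ are pairwise disjoint and none are contained in $[\ell-N+1,\ell]$. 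The resulting support of $We_j$ is therefore $\{e_j,e_p\}\cup\{e_{j_{s'}+j-j_s}:s'<s\}$, and after applying $U[\gamma_{1,\ell}]^N$ these three types of vectors are sent to $e_{j+N}\in[j_s,j_s+N-1]$, to $e_{j-j_s+N+1}\in[1,N]$, and to $e_{j_{s'}+j-j_s+N}\in[j_{s'},j_{s'}+N-1]$ respectively, all zero-cross columns of $A$. A parallel calculation handles $j\in[\ell-N+1,\ell]$, and the generic case is immediate.
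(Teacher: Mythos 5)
Your proposal is correct, but it takes a substantially different and noticeably heavier route than the paper's proof. The paper does not invoke Lemma~\ref{V} at all; it works directly with the definition $V_n(\Theta) = U[\gamma_{1,n}]^N\prod_{k=N}^{n-1}u_{\eta_{k,n}}(\Theta_{k+1})$. The key observation there is that $AU[\gamma_{1,n}]^N$ is already strictly lower triangular (the cycle shifts columns of $A$ down by $N$, which together with $r(A)\leq N$ kills the upper triangle, and the $N$ wrap-around columns land in the zero crosses at positions $1,\ldots,N$), and moreover for every $k$ with $\Theta_k>0$ it has zero columns precisely at $\{k-N,\ldots,k-1\}\cup\{n-N+1,\ldots,n\}$ --- which is exactly the set of indices moved by the factor $u_{\eta_{k-1,n}}(\Theta_k)$. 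Right-multiplying by a unitary that only mixes already-zero columns leaves the matrix unchanged, so $AV_n = AU[\gamma_{1,n}]^N$ and the result is immediate. Your argument instead first uses the block decomposition of $V_n$ from Lemma~\ref{V}, proves the compatible block-diagonality of $A$ (a nice structural observation in its own right, correctly justified by combining $r(A)\leq N$ with the zero crosses at $k_{j+1},\ldots,k_{j+1}+N-1$), and then tracks the support of $V e_j$ basis vector by basis vector. This bookkeeping does check out --- the disjointness of the intervals $[j_s-N,j_s-1]$ and their separation from $[\ell-N+1,\ell]$ is exactly what keeps the support of $We_j$ from spreading, and the final application of $U[\gamma_{1,\ell}]^N$ lands everything in zero-cross columns or in a single column within diagonal radius. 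So both proofs are sound; what the paper's approach buys is brevity and independence from Lemma~\ref{V} (it uses only the definition of $V_n$, not its block structure), whereas your approach buys a more explicit picture of $AV_n$ as a block matrix, at the cost of considerably more case analysis.
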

\begin{proof} Suppose $A$ is an $n\times n$ matrix with $r(A)\leq N$, and whenever $\Theta_k>0$, $A$ has zero crosses at positions $k,\ldots, k+N-1$. Then $AU[\gamma_{1,n}]^N $ is strictly lower triangular and whenever $\Theta_k>0$, $A U[\gamma_{1,n}]^N $ has zero columns at positions $k-N,\ldots, k-1$ as well as at positions $n-N+1,\ldots, n$. Therefore, for every $k$ such that $\Theta_k>0$, every column that $u_{\eta_{k,i}}(\Theta_{k})$ does not fix is already zero. Therefore $AV_n $ is also strictly lower triangular.\end{proof}

\section{Unitaries in DSH Algebras}

We will need the following approximation result.

\begin{lemma}Suppose that $f\in A$ where $A$ is DSH. Then for any $\epsilon>0$ there exist $f^\prime\in A$ such that $\left|f-f^\prime\right|<\epsilon$ and open sets $U_{i,k}$ such that $B_{i,k}\subseteq U_{i,k}$ and whenever $x\in U_{i,k}$, $f_i^\prime(x)$ has a block point at position $k$. Moreover, $f^\prime$ retains any zero crosses of $f$ and $r(f_i^\prime(x))\leq r(f_i(x))$ for all $i$ and $x\in X_i$.\label{blockpoint} \end{lemma}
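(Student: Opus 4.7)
I proceed by induction on the length $l$ of the DSH composition sequence of $A$. The base case $l=1$ is immediate: $A=C(X_1,M_{n_1})$, the only block-point condition (at $k=1$) is vacuous, so $f'=f$ and $U_{1,1}=X_1$ work.

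For the inductive step, write $A=A_{l-1}\oplus_{C(Y_l,M_{n_l})} C(X_l,M_{n_l})$ with respect to $\phi_{l-1}$, and $f=(a,g)$ with $\phi_{l-1}(a)=g|_{Y_l}$. Apply the inductive hypothesis to $a$ with tolerance $\epsilon/3$, obtaining $a'\in A_{l-1}$ and open sets $U_{i,k}$ for $i<l$. Set $h':=\phi_{l-1}(a')$, so $\|h'-g|_{Y_l}\|<\epsilon/3$. By Tietze's extension theorem applied entry by entry, prescribing the $(p,q)$ entry of the extension to vanish on those closed subsets of $X_l$ where the corresponding entry of $g$ is forced to vanish (by a zero cross of $g$, or by $|p-q|\geq r(g(\cdot))$)---consistent on $Y_l$ because the inductive preservation of zero crosses and diagonal radii forces $h'(y)_{p,q}=0$ there---we obtain $g_0\in C(X_l,M_{n_l})$ with $g_0|_{Y_l}=h'$, $\|g_0-g\|<\epsilon/3$, inheriting the zero crosses and diagonal radii of $g$. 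Thus $(a',g_0)\in A$ lies within $\epsilon/3$ of $f$.

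The central step is to perturb $g_0$ so that block points at each $B_{l,k}$ extend to open neighborhoods in $X_l$, without disturbing $g_0|_{Y_l}=h'$. The key claim is: for each $k>1$ with $B_{l,k}$ nonempty there is an open set $W_{l,k}\subseteq Y_l$ containing $B_{l,k}$ on which $h'$ already has a block point at $k$. Fix $y_0\in B_{l,k}$ with canonical decomposition $(x_1^0,\ldots,x_t^0)$ (with $x_j^0\in X_{i_j^0}\setminus Y_{i_j^0}$), in which $k$ is an outer boundary. For $y\in Y_l$ close to $y_0$, its decomposition $(x_1,\ldots,x_{t'})$ either preserves the cut at $k$ (yielding a block point at $k$ automatically) or merges two of $y_0$'s blocks across $k$. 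In the latter case, the base $x_j$ of $y$'s merged block lies in some $X_{i'}$ (with $n_{i'}$ equal to the merged length) and converges as $y\to y_0$ to a point in $X_{i'}$ whose canonical decomposition in $A_{l-1}$ recovers $(x_{j_0}^0,x_{j_0+1}^0)$; this limit therefore lies in $B_{i',k'}\subseteq U_{i',k'}$, where $k'$ is the offset of $k$ within the merged block. By the inductive hypothesis $a'_{i'}(x_j)$ has a block point at $k'$, giving $h'(y)$ a block point at $k$. Finiteness of decomposition types near $y_0$ covers all remaining configurations.

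Using the claim, for each such $k$ choose an open $\tilde U_{l,k}\supseteq B_{l,k}$ in $X_l$ with $\tilde U_{l,k}\cap Y_l\subseteq W_{l,k}$, and a smaller open $U_{l,k}\supseteq B_{l,k}$ with $\overline{U_{l,k}}\subseteq\tilde U_{l,k}$, shrunk enough that $|g_0(y)_{p,q}|$ is uniformly small on $U_{l,k}$ for every $(p,q)$ straddling $k$ (possible by continuity, since these entries vanish on $B_{l,k}$). Pick continuous $\chi_{l,k}\colon X_l\to[0,1]$ with $\chi_{l,k}=0$ on $U_{l,k}$ and $\chi_{l,k}=1$ off $\tilde U_{l,k}$, and set
\[
g'(y)_{p,q}=g_0(y)_{p,q}\prod_{k:\,\min(p,q)<k\leq\max(p,q)}\chi_{l,k}(y)
\]
for $p\neq q$, with $g'(y)_{p,p}=g_0(y)_{p,p}$. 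Then $g'$ has a block point at $k$ on $U_{l,k}$; on $Y_l\cap\tilde U_{l,k}\subseteq W_{l,k}$ the modified entries of $g_0=h'$ are already zero, so $g'|_{Y_l}=h'$ and $(a',g')\in A$; $\|g'-g_0\|<\epsilon/3$ by smallness; zero crosses and diagonal radii are preserved since we only multiply by scalars in $[0,1]$. The main obstacle is the key claim, which requires careful bookkeeping of how canonical decompositions of points in $Y_l$ vary locally.
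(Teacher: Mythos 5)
Your proposal takes a genuinely different route from the paper. The paper's proof is a single non-inductive step: apply the scalar truncation $g_\delta(z)=z\max(0,|z|-\delta)/|z|$ to every matrix entry of every component $f_i$. Because $g_\delta$ fixes $0$ and is applied uniformly, it automatically respects the diagonal compatibility conditions (so $f'\in A$), preserves zeros (hence zero crosses and diagonal radius), and because $g_\delta$ vanishes on a disk of radius $\delta$, it converts each pointwise zero on $B_{i,k}$ into a zero on an open neighborhood; compactness then supplies a single $\delta$. No induction on composition length, no Tietze extension, and no analysis of how decompositions vary.

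The serious problem with your proof is the unestablished key claim that $h'=\phi_{l-1}(a')$ already has a block point at $k$ on an open $Y_l$-neighborhood $W_{l,k}$ of $B_{l,k}$. This claim is not optional in your scheme: since you only perturb $g_0$ off $Y_l$ and require $g'|_{Y_l}=h'$, and since $B_{l,k}\subseteq Y_l$, your $g'$ can only have a block point at $k$ on an open set containing $B_{l,k}$ if $h'$ itself already does so on the $Y_l$-trace. But what you offer in its support is a pointwise convergence statement (the base point of a merged block ``converges as $y\to y_0$ to a point in $B_{i',k'}\subseteq U_{i',k'}$'') followed by ``finiteness of decomposition types near $y_0$ covers all remaining configurations.'' Passing from convergence along one net to a genuine open neighborhood of the whole compact set $B_{l,k}$, simultaneously for all block positions $p_j$, all component indices $i_j$, and all straddling entries $(p,q)$, requires a subnet/pigeonhole compactness argument and an appeal to Lemma~\ref{blockchar} to identify the limiting base point as lying in $B_{i_j,k-p_j+1}$; none of that is written down, and nothing in the DSH axioms asserts continuity of the decomposition maps $y\mapsto x_j(y)$ that you are implicitly invoking. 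There is also a smaller gap in the Tietze step: you need the prescribed-vanishing set for each entry $(p,q)$ (zero crosses of $g$ and the band $|p-q|\ge r(g(\cdot))$) to be closed and to agree with the boundary data $h'|_{Y_l}$ on the overlap, and this in turn relies on the inductive preservation statements in a way you assert but do not verify. I believe both claims are ultimately true, but the argument as written does not prove them, and the whole inductive machinery is dispensable given the paper's one-line truncation.
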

\begin{proof} Let $\epsilon>0$. We construct $f^\prime$ by simultaneously modifying $f$ on every $X_i$. We do this by considering each function $f_i(x)_{s,t}$ for indices $s,t\in \{1,\ldots n_i\}$ as a function in $C(X_i)$. For $\delta>0$ let $g_\delta\in C(\mathbb{C})$ be given by $g_\delta(z)= z{\mathrm{max}(0, \left|z\right| - \delta)}/{\left|z\right|} $. For any space $X$ and any $f\in C(X)$ and any $x\in X$ such that $f(x)=0$, $g_\delta \circ f$ is zero on an open set containing $x$. Since each space $X_i$ is compact, we may choose a $\delta$ such that $\left|f_i(x)_{s,t}- g_\delta(f_i(x)_{s,t})\right|<{\epsilon}/{n_l^2}$, where $n_l$ is the largest dimension of any irreducible representation of $A$.

For $x\in X_i$, we define $f_i^\prime(x)$ to be the matrix given by the entries $f_i^\prime(x)_{s,t}=g_\delta(f_i(x)_{s,t})$. Then for each $x$, $\left|f_i^\prime(x)-f_i(x)\right|<\epsilon$, hence $\left|f^\prime -f\right|<\epsilon$. Since we chose a single value of $\delta$ and modified every entry in a uniform manner, it is easy to see that when $y\in Y_i$, $f_i^\prime(y)=\mathrm{diag}( f_{i_1}^\prime(x_{y_1}),\ldots,f_{i_s}^\prime (x_{y_s}) )$, hence $f^\prime \in A$.

For every $i,x,s,t$, if $f_i(x)_{s,t}=0$ then $f_i^\prime(x)_{s,t}=0$. Therefore $f_i^\prime$ has a zero cross wherever $f_i$ does and $r(f_i^\prime(x))\leq r(f_i(x))$. For $k\in \{1,\ldots, n_i\}$.

Fix $i$. Define $P(k)\subseteq \{1,\ldots, n_i\}^2$ by $P(k)= \{(s,t): s<k\mbox{ and }t\geq k \mbox{ or } t<k\mbox{ and }s\geq k\}$. For any $k$, when $x\in B_{i,k}$, $f_i(x)_{s,t}=0$ whenever $(s,t)\in P(k)$. For each $(s,t)\in P(k)$, there is an open set $O_{s,t}$ containing $B_{i,k}$ on which $f_i^\prime(x)_{s,t}=0$. Taking $U_{i,k}$ to be the intersection $\bigcap_{(s,t)\in P(k)} O_{s,t}$, $f_i^\prime(x)$ has a block point at position $k$ for all $x\in U_{i,k}$.\end{proof}

We will need to construct certain indicator functions in order to implement the unitaries defined above. %In so doing, we will often need to ensure that a function $\Theta_k$ in $C(X_i, [0,1])$ is zero on some closed set $f_{i,k}$ that is separated from $B_{i,k}$. Our construction leads us to the following notion.

%\begin{dfn} Suppose we have a DSH algebra $A\subset \bigoplus_i C(X_i, M_{n_i})$ and a family of closed sets $f_{i,k}$ where $1\leq k \leq n_i$.  Consider $y\in f_{i,k}\cap Y_i$ with corresponding points $x_{y,1}, \ldots x_{y,s}$, where $x_{y,t}\in X_{i_t}$, and suppose $k = n_{i_1}+\cdots+ n_{i_t} +k^\prime$ where $k^\prime\leq n_{i_{t+1}}$. If it is the case that for every such $i,y,k$, $x_{y,t+1}\in f_{i_{t+1}, k^\prime}$, then we call the family of sets $f_{i,k}$ {\em well-constructed}.\end{dfn}

\begin{lemma} Suppose that $A$ is a DSH algebra, $M\in\mathbb{N}$, $K=\{K_1,\ldots,K_m\}$ is a set of integers such that $K_{t+1}-K_t \geq M$, $K_1\geq 0$, and $K_m\leq n_1-M$. For each $i,k$ let $F_{i,k}\subset X_i$ be a closed set separated from $B_{i,k-K_t}$ for each $t\in\{1,\ldots,m\}$. Then there is a function $\Theta\in A$ such that  
\begin{enumerate}
\item for every $x\in X_i$, $\Theta_i(x)$ is a diagonal matrix with entries in $[0,1]$.
\item For any $M$ consecutive entries on the diagonal, at most one is non-zero.
\item $\Theta_i(x)_{k,k}=0$ for all $k\in \{n_i-(M-1),\ldots, n_i\}$.
\item For each $i,k$, $\Theta_i(x)_{k,k}=0$ for all $x\in F_{i,k}$.
\item For each $i,k$, there is an open subset $U_{i,k}\subset X_i$ such that $B_{i,k}\subseteq U_{i,k}$ and if $x\in U_{i,k}$ then $\Theta_i (x)_{k+K_t,k+K_t}=1$ for all $t\in\{1,\ldots,m\}$.

\end{enumerate}

\label{indicator}\end{lemma}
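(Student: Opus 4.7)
The plan is to inductively construct continuous functions $g_{i,k} : X_i \to [0,1]$ along the DSH composition sequence $i = 1, \ldots, l$ and then set $\Theta_i(x) = \mathrm{diag}(g_{i,1}(x), \ldots, g_{i,n_i}(x))$. Define $S_{i,k} = \bigcup_{t=1}^m B_{i, k - K_t}$, with $B_{i,j} = \emptyset$ when $j \leq 0$; each $S_{i,k}$ is closed by Lemma~\ref{blockchar}. Using $K_{t+1} - K_t \geq M$ and $K_m \leq n_1 - M$, together with the fact that two distinct block-point positions for a single $y \in Y_i$ differ by at least $n_1$, one verifies that $S_{i,k} \cap S_{i,k+j} = \emptyset$ whenever $0 < j < M$, and that $S_{i,k} = \emptyset$ whenever $k$ satisfies $0 < |k - (1 + K_t)| < M$ for some $t$ or $k > n_i - M + 1$.

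Before the induction, I propagate the vanishing constraints downward through the composition sequence. Initialize $\tilde F_{i,k} = F_{i,k}$. Then for $i = l, l-1, \ldots, 2$ in turn, for each $y \in \tilde F_{i,k} \cap Y_i$ with diagonal decomposition $(x_1, \ldots, x_s)$ and position $k$ lying in the $r$-th block starting at $k_0^{(r)}$, enlarge $\tilde F_{j_r,\, k - k_0^{(r)} + 1}$ to include $x_r$. The assumption that each $\tilde F_{i,k}$ is disjoint from $B_{i, k - K_t}$ transfers under the block structure of the decomposition to give $x_r \notin B_{j_r,\, k - k_0^{(r)} + 1 - K_t}$, so by downward induction every augmented $\tilde F_{j,k}$ remains disjoint from the corresponding $B_{j, k - K_t}$. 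From here on I work with the augmented $\tilde F$'s.

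The base case $i = 1$ is immediate since $Y_1$ is empty: for each $k \leq n_1 - M + 1$, Urysohn's lemma applied to the disjoint closed sets $S_{1,k}$ and $\tilde F_{1,k} \cup \bigcup_{0 < |j| < M} S_{1,k+j}$ yields $g_{1,k}$ taking value $1$ on an open neighborhood of $S_{1,k}$ and $0$ on the other set; for $k > n_1 - M + 1$ set $g_{1,k} \equiv 0$. For the inductive step at $i > 1$, the values $g_{i,k}|_{Y_i}$ are forced by the decomposition formula $g_{i,k}(y) = g_{j_r, k - k_0^{(r)} + 1}(x_r)$, which is continuous on $Y_i$ because it arises from the homomorphism $\phi_{i-1}$. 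One verifies that this forced data satisfies all the required invariants on $Y_i$: it equals $1$ on $S_{i,k} \cap Y_i$ (since $g_{j_r, K_t+1}(x_r) = 1$ by induction, as $B_{j_r,1} = X_{j_r}$ is contained in $S_{j_r, K_t+1}$); vanishes on $\tilde F_{i,k} \cap Y_i$ (by the augmentation); satisfies the pointwise $M$-disjointness (within a single block by induction, and across a block boundary via the inductive vanishing $g_{j_r, k'} = 0$ for $k' > n_{j_r} - M + 1$); and vanishes for $k > n_i - M + 1$. I then extend each $g_{i,k}|_{Y_i}$ to $X_i$ by selecting open neighborhoods $V_{i,k} \supseteq S_{i,k}$ that are pairwise disjoint for $|k - k'| < M$, disjoint from $\tilde F_{i,k}$, and contain $\{y \in Y_i : g_{i,k}(y) > 0\}$, and applying Tietze extension inside each $\overline{V_{i,k}}$ to match the prescribed values on $Y_i \cap \overline{V_{i,k}}$, the value $1$ on an open neighborhood of $S_{i,k}$, and the value $0$ on $\partial V_{i,k}$, with $g_{i,k} \equiv 0$ outside $V_{i,k}$.

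The main obstacle is the combined bookkeeping needed to verify that all four inductive invariants propagate through the $Y_i$ decomposition simultaneously. The $F$-augmentation is essential because $g_{i,k}|_{Y_i}$ is fully determined by the time level $i$ is reached, so any vanishing condition on $\tilde F_{i,k} \cap Y_i$ must already be enforced at the corresponding lower-level $g_{j_r,\, k - k_0^{(r)} + 1}$. The quantitative inequality $n_1 \geq M + K_m$ is used repeatedly to ensure constraints do not collide --- notably to show $S_{i,k}$ is empty in the ``forbidden zones'' around each $1 + K_t$, and to keep the $M$-consecutive disjointness intact at block boundaries via the inductive vanishing property (3).
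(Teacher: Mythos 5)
Your route is genuinely different from the paper's. The paper first builds $\Theta$ with the stronger pointwise characterization that $\Theta_{i,k}(x)=1$ if and only if $x\in B_{i,k-K_t}$ for some $t$, constructing $\Theta_{i,k}$ sequentially in $k$ via the explicit formula $\Theta_{i,k}=\min\bigl(f,\max(g,0)\bigr)$ where $g$ extends $\Theta_{i,k}|_{Y_i}-\sum_{t=1}^{M-1}\Theta_{i,k+t}|_{Y_i}$; conditions (4) and (5) are then obtained at the very end, in one stroke, by post-composing with a cutoff function that vanishes near $0$ and equals $1$ near $1$, using compactness of the $F_{i,k}$. You instead propagate the vanishing constraints $F_{i,k}$ downward through the composition sequence and enforce (4) directly inside the recursion. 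That is an interesting alternative, and your analysis of the forced values on $Y_i$ is essentially sound, but it introduces two issues that the paper's formula is designed to avoid.

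First, the augmented $\tilde F_{j_r,k'}$ are images of $\tilde F_{i,k}\cap Y_i$ under the only locally defined corresponding-point maps and need not be closed; one must pass to closures and re-verify disjointness from $B_{j_r,k'-K_t}$. This does hold, because the block-point correspondence of Lemma~\ref{blockchar} passes to limits, but it is not automatic and you do not address it.

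Second, and more seriously, your Tietze step asserts without justification the existence of pairwise disjoint open sets $V_{i,k}\supseteq S_{i,k}\cup\{y\in Y_i:g_{i,k}(y)>0\}$ for $0<|k-k'|<M$. For such $k$ and $k'$ these two sets are separated (each is disjoint from the closure of the other), but separated sets need not admit disjoint open neighbourhoods in a space that is merely normal, and the $X_i$ are only assumed compact Hausdorff, hence not necessarily completely normal. The paper's $\min/\max$ construction is engineered precisely to sidestep this: since $\max\bigl(\Theta_{i,k}-\sum_t\Theta_{i,k+t},\,0\bigr)$ vanishes on the open set where $\sum_t\Theta_{i,k+t}>\Theta_{i,k}$, the closed support of the newly defined $\Theta_{i,k}$ is automatically disjoint from the forced $Y_i$-supports of $\Theta_{i,k+1},\ldots,\Theta_{i,k+M-1}$, and the next extension step stays consistent with no hereditary-normality hypothesis. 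You would need to import that device, or run the $k$-recursion sequentially with an analogous safeguard, to make your extension step go through.
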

\begin{proof} We claim it is sufficient to find $\Theta\in A$ for which conditions (1), (2), and (3) hold and furthermore $\Theta_i (x)_{k,k}=1$ if and only if there exists a $j$ and a $t\in\{1,\ldots,m\}$ such that $k=j+K_t$ and $x\in B_{i,j}$. We call this condition (6). Given such a $\Theta$, we will modify it to satisfy conditions (4) and (5). Let $g:[0,1]\to [0,1]$ be any function for which $g(x)=0$ on some neighbourhood of $0$ and $g(x)=1$ on some neighbourhood of $1$.

On each $X_i$ and for each $k$, we consider $\Theta_i(x)_{k,k}$ to be a function in $C(X_i,[0,1])$, and we refer to this function as $\Theta_{i,k}$. Consider $\Theta_i(x)= \mathrm{diag}( \Theta_{i,1}(x),\ldots, \Theta_{i,n_i}(x))$. We define $\Theta_i^\prime\in C(X_i, M_{n_i})$ by $\Theta_i^\prime(x)= \mathrm{diag}( g\circ\Theta_{i,1}(x),\ldots, g\circ\Theta_{i,n_i}(x))$. Letting $\Theta^\prime = \oplus_i \Theta_i^\prime$, we may check that this lies in $A$. It is clear that $\Theta^\prime$ satisfies condition (1), and since $g$ preserves $0$, it also satisfies (2), and (3).

Since each $F_{i,k}$ is separated from $B_{i,k-K_t}$ for each $t\in\{1,\ldots,m\}$, condition (6) tells us that $\Theta_{i,k}(x)<1$ whenever $x\in F_{i,k}$. Since each set $F_{i,k}$ is compact, there exists a maximum value $\delta<1$ such that for every $i,k$, $\Theta_{i,k}(x)\leq\delta$ whenever $x\in F_{i,k}$. Therefore we may choose $g$ such that $g(x)=0$ on $[0,\delta]$, so that $g\circ \Theta_{i,k}(x)=0$ whenever $x\in F_{i,k}$. Finally, it follows from the definition of $g$ that for each any $j$ and $k\in K$, $\Theta_i (x)_{j+k,j+k}=1$ on an open set $O_{k}$ containing $B_{i,j}$. We take $U_{i,k}$ to be the intersection $\bigcap_{k\in K} O_k$, and $\Theta^\prime$ therefore satisfies the lemma, proving our claim.

To construct $\Theta$, we proceed recursively on $i$.

For $i=1$, we simply let $\Theta_1=\mathrm{diag}(\chi_K(1), \chi_K(2),\ldots, \chi_K(n_1))$ for all $x$ where $\chi_K$ is the characteristic function for $K$. Suppose we have already defined $\Theta_{i^\prime}$ for $i^\prime <i$ and that for each $k\leq n_1$, $\Theta_{i^\prime,k}$ is the constant function $\chi_K(k)$ and for $k\geq n_{i^\prime}-n_1+1$, $\Theta_{i^\prime,k}=0$. 

Then $\Theta_i$ is determined on $Y_i$. Fixing $y\in Y_i$, we check that conditions (1)-(3) and (6) are satisfied by $\Theta_i(y)=\mathrm{diag}( \Theta_{i_1}(x_{1}),\ldots,\Theta_{i_s} (x_{s}) )$. Condition (1) is trivial. (2) needs only to be checked when we consider $M$ consecutive entries spanning distinct components $\Theta_{i_t} (x_{y_t})$. Since $M<n_1$, it suffice to consider two consecutive blocks $\Theta_{i_t} (x_{y_t})\oplus \Theta_{i_{t+1}} (x_{y_{t+1}})$. Condition (3) applied to $\Theta_{i_t} (x_{y_t})$ gives us condition (2).  Condition (3) for $\Theta_i(y)$ follows from the same for $\Theta_{i_s} (x_{y_s})$. 
To check condition (6), fix $k$ and consider $y\in  Y_i$ with corresponding points $x_{y,1}, \ldots x_{y,s}$, where $x_{y,t}\in X_{i_t}$, and suppose $k = n_{i_1}+\cdots+ n_{i_t} +k^\prime$ where $k^\prime\leq n_{i_{t+1}}$. Then $\Theta_{i,k}(y)=\Theta_{i_{t+1},k^\prime}(x_{t+1})$. $y\in B_{i,j}$ if and only if $j\in\{1, n_{i_1}+1, n_{i_1}+n_{i_2}+1,\ldots\}$, and the only value of $j$ such that $y\in B_{i,j}$ for which there can exist a $K_t \in K$ such that $k=j+K_t$ is $j=n_{i_1}+\cdots+ n_{i_t} +1$. Hence there exists a $j$ and a $t\in\{1,\ldots,m\}$ such that $k=j+K_t$ and $x\in B_{i,j}$ if and only if $k^\prime \in K$. We can assume $x_{t+1}$ lies in $X_{i_{t+1}}\setminus Y_{i_{t+1}}$, so $x_{t+1}$ lies in $B_{i_{t+1},j}$ if and only if $j=1$. Therefore, applying condition (6) to $\Theta_{i_{t+1}}$, $\Theta_{i_{t+1},k^\prime}(x_{t+1})=1$ if and only if $k^\prime\in K$. This shows that $\Theta_{i}$ satisfies (6) on $Y_i$.

We extend $\Theta_{i,k}$ to $X_{i}$ as the constant function $\chi_K(k)$ for every $k\leq n_1$ and the final $n_1-1$ functions as the constant zero function. 

Suppose we have defined $\Theta_{i,1},\ldots ,\Theta_{i,k-1}$ on all of $X_{i}$ and that $k$ does not fall into one of the sets for which $\Theta_{i,k}$ has already been given. Assume $\mathrm{supp}(\Theta_{i,k})\subset Y_i$ is disjoint from $\bigcup_{t=1}^{M-1} \overline{ \mathrm{supp}(\Theta_{i,k-t})}$.
We first set $f= \Theta_{i,k}$ on $Y_i$ and $f=0$ on $\bigcup_{t=1}^{M-1} \overline{ \mathrm{supp}(\Theta_{i,k-t})}$. We can then extend $f$ to a function that is strictly less than $1$ on $X_{i}\setminus Y_i$.

We define $g=\Theta_{i,k}- \sum_{t=1}^{M-1} \Theta_{i,k+t}$ on $Y_i$, and we may extend to the entirety of $X_{i}$. Let $g^\prime= \mathrm{max}(g,0)$. $g^\prime$ is therefore $0$ on an open set $U$ containing $\bigcup_{t=1}^{M-1}  \mathrm{supp}(\Theta_{i,k+t})$. Hence, each of these sets is separated from $\overline{ \mathrm{supp}(\Theta_{i,k})}$. Moreover, $g^\prime$ agrees with $\Theta_{i,k}$ on $Y_i$.

Finally, we let $\Theta_{i,k}= \mathrm{min}(f,g^\prime)$. $\Theta_{i,k}$ is therefore $0$ on $\bigcup_{t=1}^{M-1} \overline{ \mathrm{supp}(\Theta_{n-t})}$ and $\overline{ \mathrm{supp}(\Theta_{i,k})}$ is disjoint from $\bigcup_{t=1}^{M-1}  \mathrm{supp}(\Theta_{i,n+t})$. We may check that $\Theta_i(x)$ then satisfies conditions (1)-(3) and (6) for every $x\in X_i$.\end{proof}

\begin{lemma}Suppose that $A$ is a DSH algebra, $M, N \in \mathbb{N}$ with $MN<n_1$, and $f\in A$ is a function such that for every $i,k$, there is an open set $U_{i,k}$ containing $B_{i,k}$ such that when $x\in U_{i,k}$, $f_i(x)$ has zero crosses in positions $k,k+ M, k+2M,\ldots, k+(N-1)M$ and a block point at position $k$. Then there exists a unitary $V\in A$ such that $r(Vf_i V^*(x))\leq r(f_i(x))+2$ for all $i$ and $x\in X_i$ and open sets $U_{i,k}^\prime \supset B_{i,k}$ such that when $x\in U_{i,k}^\prime$, $Vf_i V^*(x)$ has zero crosses in positions $k, k+1, k+2,\ldots, k+(N-1)$.\label{second}\end{lemma}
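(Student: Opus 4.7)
The plan is to build $V$ as a global analogue of the condensing unitary from Lemma \ref{condense}, with parameters drawn from an indicator function in $A$ supplied by Lemma \ref{indicator}.

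First I would apply Lemma \ref{indicator} with shift set $K = \{M, 2M, \ldots, (N-1)M\}$ and block-size parameter $M$; the hypothesis $MN < n_1$ guarantees $(N-1)M \le n_1 - M$, so the lemma applies. Taking the exclusion sets $F_{i,p}$ to be complements of a neighbourhood of $\bigcup_{j} B_{i,\,p - jM}$, this produces a diagonal, $[0,1]$-valued $\Theta \in A$ with the property that for every $i,k$ there is an open $W_{i,k} \supseteq B_{i,k}$ on which $\Theta_i(x)_{k+jM,\,k+jM} = 1$ for all $j = 1, \ldots, N-1$. I will take $U_{i,k}^\prime = W_{i,k} \cap U_{i,k}$.

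For the unitary, for each $j = 1, \ldots, N-1$ I would define $V^{(j)} \in A$ as a product of adjacent-transposition factors $u_{(p\; p+1)}(\cdot)$ whose composition at parameter $1$ is the cycle $U[(k\; k+1\; \cdots\; k+jM)]$ carrying the cross at $k + jM$ down to position $k$ within the sub-block starting at $k$ (this mirrors $u_j^1(1)$ in the proof of Lemma \ref{condense}, with $k$ playing the role of block-relative position $1$). The continuous parameters are drawn from entries of $\Theta$ via interpolating functions analogous to the $\delta_m^k$ of \ref{condense}, chosen so that the transition zones for distinct $j$ are mutually disjoint in $x$. Setting $V = V^{(N-1)} \cdots V^{(1)}$, membership $V \in A$ follows from $\Theta \in A$ together with condition (3) of \ref{indicator}, which renders any transposition straddling a DSH block boundary trivial. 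That $V f V^*$ has zero crosses at $k, k+1, \ldots, k+N-1$ on $U_{i,k}^\prime$ then follows pointwise as in the $\theta = 1$ analysis of \ref{condense}: each full cycle shifts one source cross to its target while preserving the already-condensed crosses, culminating in the desired consecutive cross pattern.

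The main obstacle is the radius bound $r(V f V^*(x)) \le r(f(x)) + 2$. A naive composition of $N-1$ cycles would accumulate radius at an unacceptable rate of order $N - 1$. The trick, as in Lemma \ref{condense}, is to synchronize the interpolation so that at each $x$ at most one of the $V^{(j)}$ is partial, with the others evaluating to $I$ or a full cycle. On the safe region $U_{i,k}^\prime$ the block point at $k$ places the cycles in the block-relative $i = 1$ case of \ref{condense}, so the full cycles preserve the diagonal radius entirely; in a transition region, the lone partial cycle contributes at most $+2$ via the endgame of \ref{condense}'s proof, which combines one full cycle (raising the radius by at most $1$) with a single in-progress transposition (another $+1$ via Lemma \ref{block2} with $N = 1$, $M = 2$). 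The spacing condition $MN < n_1$ ensures that for any single $x$ the cross windows of distinct $B_{i,k}$ are disjoint, so the associated cycles act on disjoint index ranges, commute, and can be analyzed independently.
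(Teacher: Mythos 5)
Your broad plan---feed indicator values from Lemma~\ref{indicator} into unitaries built on the condensing construction of Lemma~\ref{condense}---is the right instinct and is close to what the paper does, but there is a genuine gap in the way you parameterize the cycle unitaries. You call Lemma~\ref{indicator} with $K=\{M,2M,\ldots,(N-1)M\}$, obtaining $N-1$ separate scalar functions $\Theta_{i,k+jM}$ per block-start position $k$, and then propose to use them to drive $N-1$ separate cycle unitaries $V^{(1)},\ldots,V^{(N-1)}$. The whole reason the radius bound in Lemma~\ref{condense} is $+2$ rather than $+O(N)$ is the guarantee that at each parameter value at most one of the cycles is partial (strictly between $0$ and $1$), with the rest being the identity or a full permutation. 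Nothing in the conclusion of Lemma~\ref{indicator} forces the $N-1$ indicator values $\Theta_{i,k+M}(x),\ldots,\Theta_{i,k+(N-1)M}(x)$ to move in lockstep: for an $x$ in the transition region they can all be simultaneously strictly between $0$ and $1$, at different values. You say the interpolating functions can be ``chosen so that the transition zones for distinct $j$ are mutually disjoint in $x$,'' but that is exactly what needs to be proved and is not provided by Lemma~\ref{indicator}; composing with stage functions $\delta^j_{N-1}$ does not help, since those sequence a single parameter, not $N-1$ independent ones.

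The paper avoids the coordination problem entirely by calling Lemma~\ref{indicator} with $K=\{0\}$, producing one scalar $\Theta_{i,k}(x)\in[0,1]$ per block start $k$, and evaluating the single unitary path $V(\cdot):[0,1]\to M_{NM}$ from Lemma~\ref{condense} (with $z_1=1,z_2=M+1,\ldots,z_N=(N-1)M+1$) at that scalar. The sequencing of the $N-1$ cycles is then handled internally by the $\delta^k_m$ built into Lemma~\ref{condense}, applied to the one parameter $\theta=\Theta_{i,k}(x)$; the ``one partial cycle at a time'' structure comes for free, and the radius bound is exactly the one proved there. The local unitary is $u_k(x)=1_{k-1}\oplus V(\Theta_{i,k}(x))\oplus 1_{n_i-(NM+k-1)}$, and the global unitary $V'_i=\prod_k u_k$ at each $x$ becomes, after discarding the trivial factors, a block-diagonal product over the sparse set of active $k$'s (separated by at least $NM$). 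Membership in $A$ is then checked once for $V'$ using $\Theta\in A$ and the block point at each active $k$, which is what makes the product decompose compatibly with the DSH gluing---this is also where your proposal is vague, since your $V^{(j)}$ are global factors and it is not clear each individually lies in $A$.
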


\begin{proof} We utilize Lemma~\ref{indicator} with $K=\{0\}$ to obtain $\Theta\in A$ such that for each $i,k$, $\Theta_{i,k}=0$ on the complement of the open set $U_{i,k}$. There exist open sets $U_{i,k}^\prime$ such that $U_{i,k}\supseteq\overline{U_{i,k}^{\prime}} \supseteq B_{i,k}$ on which $\Theta_{i,k}=1$. For any consecutive $NM$ entries on the diagonal of any $\Theta_i(x)$, at most one is non-zero and the final $NM-1$ entries are all $0$.

For an index $k\leq n_i-NM$, let $u_k\in C(X_i, M_{n_i})$ be the operator $1_{k-1} \oplus V(\Theta_{i,k}(x)) \oplus 1_{n_i -(NM+k-1)}$ where $V: [0,1]\to M_{NM}$ is the operator given by Lemma~\ref{condense} for $z_1=1, z_2=M+1, \ldots z_N=(N-1)M+1$. Let $u_k$ be the identity for $k>n_i-NM$.

By hypothesis, whenever $\Theta_{i,k}(x)$ is non-zero, $f_i(x)$ has zero crosses at positions $k, k+M, k+2M,\ldots, k+(N-1)M$ and a block point at position $k$. It follows then, from Lemma~\ref{condense}, that $(u_k f_i u_k^*)(x)$ has zero crosses at positions $k, k+1, \ldots ,k+(N-1)$ whenever $x\in U_{i,k}^\prime$. 

Fix $x\in X_i$ and let $K(x)=\{k: \Theta_{i,k}(x)>0\}$ and write this set as $\{k_1, \ldots, k_s\}$. Note that $k_1=1$ and $k_{t+1}-k_t \geq NM$. Consider the operator $V_i^\prime\in C(X_i, M_{n_i})$ given by $V_i^\prime(x) = \prod_{k=1}^{n_i} u_k(x)$. Since $u_k(x)$ is the identity whenever $k\notin K(x)$, we can rewrite this as $V_i^\prime(x)=\prod_{t=1}^s u_{k_t}$. Since the distance between successive $k_t$ is greater than the dimension of $V$, we can give a direct sum decomposition of $V_i^\prime$. 
$$V_i^\prime(x) = \mathrm{diag}(V(\Theta_{i,k_1}(x)), 1_{d_1} , V(\Theta_{i,k_2}(x)) , 1_{d_2},\ldots, V(\Theta_{i,k_s}(x)) , 1_{d_s})$$
where $d_t= k_{t+1}-(k_t+NM)$ for $t<s$ and $d_s=n_i+1- (k_s+NM)$.

Also, because $f_i(x)$ has a block point at position $k_t$ for each $t$, $f_i(x)$ can be written as $Z_1\oplus Z_2\oplus\ldots$ where $Z_t$ is a $k_{t+1}-k_t$ dimensional matrix. Then 
$$u_{k_t} f_i u_{k_t}^*(x)= \mathrm{diag}(Z_1,\ldots, Z_{{t-1}} , (V(\Theta_{k_t}(x)) \oplus 1_{d_t}) Z_{t} (V(\Theta_{k_t}(x))^*\oplus 1_{d_t}) ,\ldots).$$ 
It then follows from Lemma~\ref{condense} applied for each $t$ that $(V_i^{\prime} f_i V_i^{\prime*})(x)$ has zero crosses at positions $k, k+1, \ldots ,k+(N-1)$ whenever $x\in U_{i,k}^\prime$. We also find that $(V(\Theta_{k_t}) \oplus 1_{d_t}) Z_t (V(\Theta_{k_t}) \oplus 1_{d_t})^*$ has diagonal radius at most $r(Z_t)+2$. Therefore $r((V_i^{\prime} f_i V^{\prime*})(x))\leq r(f_i(x))+2$.

It remains to show that $V^\prime=\oplus_i V_i^\prime \in A$. It suffices to show that whenever $y\in Y_i \subset X_i$ with corresponding points $x_{y,1}, \ldots x_{y,s}$, where $x_{y,t}\in X_{i_t}$, $V_i^\prime(y)=\mathrm{diag}( V_{i_1}^\prime(x_{y,1}),\ldots, V_{i_s}^\prime(x_{y,s}))$. Let $B(x)=\{ k: x\in B_{i,k} \}$ and note that $B(x)\subseteq K(x)$. Since $\Theta\in A$, $\Theta_i(y) = \mathrm{diag}( \Theta_{i_1}(x_{y,1}),\ldots, \Theta_{i_s}(x_{y,s}))$. Therefore if $k_{t_1}$ and $k_{t_2}$ are successive elements of $B(x)$, (they need not be successive in $K(x)$), then 
$$\mathrm{diag}(V(\Theta_{i,k_{t_1}}(y)), 1_{d_{t_1}} , \ldots, V(\Theta_{i,k_{t_2-1 }}(y)) , 1_{d_{t_2 -1}})$$
is exactly $V_{i_{t_1}}^\prime (x_{y,t_1})$ so $V_i^\prime(x)$ decomposes as necessary.

\end{proof}

\begin{lemma} Suppose that $A$ is a DSH algebra, $N\in \mathbb{N}$ with $N<n_1$, and $f\in A$ is a function such that for every $i,k$, there is an open set $U_{i,k}$ containing $B_{i,k}$ such that when $x\in U_{i,k}$, $f_i(x)$ has zero crosses in positions $k, k+1,\ldots, k+N-1,$ and $r(f_i(x))<N$. Then there exists a unitary $V\in A$ such that $(fV)_i(x)$ is strictly lower triangular for every $x$.\label{third}\end{lemma}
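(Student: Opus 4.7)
The plan is to construct $V$ fiberwise as $V_i(x) = V_{n_i}(\Theta_i(x))$, where $V_n$ is the unitary of Lemma~\ref{V} and $\Theta \in A$ is a diagonal indicator produced by Lemma~\ref{indicator}. Strict lower triangularity of $fV$ will then follow fiberwise from Lemma~\ref{triangulate}, leaving the verification that $V$ actually lies in $A$ as the substantive step.

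I would first invoke Lemma~\ref{indicator} with $M = N$, $K = \{0\}$, and closed obstruction sets $F_{i,k} = X_i \setminus U_{i,k}$. Since $B_{i,k} \subseteq U_{i,k}$ and $B_{i,k}$ is closed by Lemma~\ref{blockchar}, each $F_{i,k}$ is disjoint from (hence separated from) $B_{i,k}$; the remaining hypotheses of Lemma~\ref{indicator} amount to $N \leq n_1$, which holds since $N < n_1$. This produces $\Theta \in A$ whose fibers $\Theta_i(x)$ are diagonal with entries in $[0,1]$, the last $N$ entries zero, at most one of any $N$ consecutive entries nonzero, $\Theta_{i,k}$ supported in $U_{i,k}$, and $\Theta_{i,k} \equiv 1$ on an open neighborhood of $B_{i,k}$. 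Because every $x \in X_i$ lies in $B_{i,1}$, the first entry of $\Theta_i(x)$ equals $1$, so $\Theta_i(x)$ meets the hypotheses of Lemma~\ref{V} and $V_i(x) := V_{n_i}(\Theta_i(x))$ is a well-defined unitary in $M_{n_i}$.

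To verify $V = \oplus_i V_i \in A$, I would fix $y \in Y_i$ with corresponding points $x_1, \ldots, x_s$ lying in $X_{i_1}, \ldots, X_{i_s}$. Since $\Theta \in A$ we have $\Theta_i(y) = \mathrm{diag}(\Theta_{i_1}(x_1), \ldots, \Theta_{i_s}(x_s))$, and each $\Theta_{i_j}(x_j)$ has first entry $1$, so the block-start positions $1, n_{i_1}+1, n_{i_1}+n_{i_2}+1, \ldots$ of $f_i(y)$ all appear as $1$-positions of $\Theta_i(y)$. Applying the decomposition~\eqref{Vn} of Lemma~\ref{V} to $\Theta_i(y)$ on the one hand, and to each $\Theta_{i_j}(x_j)$ on the other, yields matching fine decompositions block-by-block, so that
$$V_i(y) = \mathrm{diag}(V_{n_{i_1}}(\Theta_{i_1}(x_1)), \ldots, V_{n_{i_s}}(\Theta_{i_s}(x_s))) = \mathrm{diag}(V_{i_1}(x_1), \ldots, V_{i_s}(x_s)),$$
which is the required DSH compatibility.

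Finally, for each $i$ and $x \in X_i$ we have $r(f_i(x)) < N$, and whenever $\Theta_{i,k}(x) > 0$ the point $x$ lies in $U_{i,k}$, so $f_i(x)$ has zero crosses at $k, k+1, \ldots, k+N-1$ by hypothesis. Lemma~\ref{triangulate} then gives that $(fV)_i(x) = f_i(x) V_{n_i}(\Theta_i(x))$ is strictly lower triangular for every $x$. The main difficulty I anticipate is the $V \in A$ step: the open-set thickening of the $1$-support of $\Theta_{i,k}$ (introduced by the smoothing $g$ in the proof of Lemma~\ref{indicator}) may create $1$-positions of $\Theta_i(y)$ that do not correspond to block boundaries of $f_i(y)$, and it is precisely the iterated structure of the decomposition~\eqref{Vn} in Lemma~\ref{V} that ensures these extra $1$'s produce refined block decompositions consistent on both sides.
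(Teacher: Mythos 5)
Your proposal matches the paper's proof essentially verbatim: both invoke Lemma~\ref{indicator} with $K=\{0\}$ (the paper leaves $M=N$ implicit) to build $\Theta$, set $V_i = V_{n_i}\circ\Theta_i$ via Lemma~\ref{V}, verify $V\in A$ through the decomposition~\eqref{Vn}, and finish pointwise with Lemma~\ref{triangulate}. Your closing remark correctly identifies the one subtlety that the paper's own proof handles rather tersely, namely that the extra $1$-entries of $\Theta_i(y)$ arising from the open-set thickening refine (rather than violate) the block structure, which is exactly what the recursive form of~\eqref{Vn} guarantees.
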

\begin{proof} We utilize Lemma~\ref{indicator} with $K=\{0\}$ to obtain $\Theta\in A$ such that for each $i,k$, $\Theta_{i,k}=0$ on $U_{i,k}^C$, $\Theta_{i,k}=1$ on $B_{i,k}$ and for any consecutive $N$ entries on the diagonal, at most one is non-zero. For each $i$ we then apply Lemma~\ref{V} to obtain $V_{i}\in C([0,1]^{n_i},M_{n_i})$,  and let $V^\prime= \bigoplus_i V_i\circ\Theta_i$, where we consider $\Theta_{i}(x)$ to be in $[0,1]^{n_i}$. $V_i^\prime$ is then a unitary in $C(X_i, M_{n_i})$.

Suppose that $y\in Y_i\subset X_i$ with corresponding points $x_{y,1}, \ldots x_{y,s}$, where $x_{y,t}\in X_{i_t}$. Consider the set $B=\{k_1=1,\ldots, k_s\}$ of $k$ such that $y\in B_{i,k}$. By our choice of $\Theta$, $\Theta_{i,k}(y)=1$ for every $k\in B$, and by the DSH decomposition, $k_{t+1} -k_t = n_{i_t}$. Since $\Theta\in A$, $\Theta_{i}(y)= \mathrm{diag}(\Theta_1(x_{y,1}),\ldots,\Theta_s(x_{y,s}))$. Therefore by \eqref{Vn},
$$V_i(\Theta_i(x))= \mathrm{diag}( V_{i_{y,s}} (\Theta_{i_1}(x_{y,1})),\ldots, V_{i_s}(\Theta_{i_s}(x_{y,s}))),$$
and similarly
$$V_i^\prime(y)= \mathrm{diag}( V_{i_1}^\prime (x_{y,1}),\ldots, V_{i_s}^\prime(x_{y,s})),$$

Therefore, $V^\prime\in A$. Finally, we apply Lemma~\ref{triangulate} at each point $x$ to conclude that $(fV)_i(x)$ is a lower triangular matrix for every $i$ and $x$.\end{proof}

\section{Spectral Considerations}

The unitary equivalence classes of irreducible representations of $A$ under the kernel-hull topology will be denoted by $\widehat{A}$. The topology on this set is defined by
 
$$\overline{X}=\{ \rho : \ker(\rho) \supseteq \bigcap_{\pi\in X} \ker(\pi)\}.$$

For any cardinal number $n$, $\widehat{A}(n)$ will denote the subspace of $n$-dimensional representation classes.

Let $A$ and $B$ be unital $C^*$-algebras, and suppose that $\phi: A \rightarrow B$ is a homomorphism. Then every irreducible representation of $B$ decomposes into a direct sum of a unique set of irreducible representations of $A$. We define -- via decomposition into irreducible representations -- a set-valued map $\widehat{\phi}: \widehat{B}\rightarrow \mathcal{P}(\widehat{A})$, which ignores multiplicity.

We can characterize simplicity of an inductive limit algebra $A= \lim (A_n , \phi_{n})$ in terms of dual maps. This is almost certainly known; however we have been unable to find a reference. It generalizes Proposition~2.1 of \cite{dadarlat}, and the proof is very similar.
For $n^\prime>n$, we will denote by $\phi_{n^\prime,n}$ the map $\phi_{n^\prime-1}\circ \cdots \circ \phi_n$. 

\begin{theorem} Let $A= \lim (A_n , \phi_{n})$ be a unital $C^*$-algebra and suppose each $\phi_{n}$ is injective. Then the following statements are equivalent
\begin{enumerate} 
\item $A$ is simple.
\item $\forall i\in\mathbb{N}$, open $U\subset \widehat{A}_i$, $\exists j>i$ such that $\widehat{\phi}_{j,i}(\pi)\cap U \neq \varnothing$ for all $\pi\in \widehat{A}_j$.
\item For any non-zero $a\in A_i$, there exists $j>i$ such that $\pi(\phi_{j,i}(a))\neq 0$ for all $\pi\in \widehat{A}_j$.
\end{enumerate}
\label{simple}\end{theorem}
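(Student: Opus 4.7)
The plan is to prove (1) $\Leftrightarrow$ (3) and (2) $\Leftrightarrow$ (3), with condition (3) serving as the convenient pivot since it involves only algebraic data at a single finite stage.

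For (1) $\Rightarrow$ (3), given non-zero $a \in A_i$, simplicity of $A$ together with injectivity of the connecting maps forces $1_A \in \overline{A\phi_{\infty,i}(a) A}$. I would approximate $1_A$ within $1/2$ by a finite sum $\sum b_k \phi_{\infty,i}(a) c_k$, then replace each $b_k, c_k$ by an approximant in some $A_j$. For $j$ large enough the resulting sum $\sum b_k' \phi_{j,i}(a) c_k'$ lies within distance $1$ of $1_{A_j}$ in $A_j$ and is therefore invertible, so the closed two-sided ideal of $A_j$ generated by $\phi_{j,i}(a)$ is all of $A_j$. Since every proper closed two-sided ideal of a unital $C^*$-algebra is contained in the kernel of some irreducible representation, this yields $\pi(\phi_{j,i}(a)) \neq 0$ for every $\pi \in \widehat{A}_j$.

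For (3) $\Rightarrow$ (1), take non-zero $a \in A$ and, after replacing $a$ by $a^*a/\|a^*a\|$, assume $a \geq 0$ with $\|a\|=1$. Approximate $a$ by a positive $a_i \in A_i$ within $\epsilon/2$; then $(a_i - \epsilon)_+$ is non-zero, belongs to $A_i$, and by the standard R\o rdam lemma lies in the closed two-sided ideal of $A$ generated by $a$. Applying condition (3) to $(a_i - \epsilon)_+$ produces an index $j$ at which the closed two-sided ideal of $A_j$ generated by $\phi_{j,i}\bigl((a_i - \epsilon)_+\bigr)$ equals $A_j$, and in particular contains $1$. Since this ideal sits inside the ideal of $A$ generated by $a$, the latter is all of $A$, so $A$ is simple.

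The equivalence (2) $\Leftrightarrow$ (3) is a translation via the hull-kernel dictionary between closed two-sided ideals of $A_i$ and closed subsets of $\widehat{A}_i$. A non-empty open $U \subseteq \widehat{A}_i$ corresponds to the non-zero ideal $I_U = \bigcap_{\pi \in U^c} \ker \pi$; any non-zero $a \in I_U$ fed into (3) produces a $j$ such that for every $\pi \in \widehat{A}_j$ some irreducible sub-representation of $\pi \circ \phi_{j,i}$ fails to annihilate $a$ and therefore lies in $U$. The converse uses $U = \{\pi \in \widehat{A}_i : \pi(a) \neq 0\}$ for a given non-zero $a \in A_i$, which is open and non-empty. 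The main obstacle in the whole argument is (3) $\Rightarrow$ (1): a non-zero $a \in A$ need not lie in any $A_i$, nor need any element of $\bigcup_i A_i$ belong to the ideal generated by $a$, so a bare norm approximation $a_i$ is not enough. The R\o rdam trick $(a_i - \epsilon)_+ \in \overline{AaA}$ is the essential ingredient that converts a norm approximation into honest ideal membership and lets condition (3) be invoked at a finite stage.
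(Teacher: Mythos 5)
Your proof is correct, and it takes a genuinely different route from the paper's. The paper proves the cycle $(1)\Rightarrow(2)\Rightarrow(3)\Rightarrow(1)$; you instead pivot on condition (3), proving $(1)\Leftrightarrow(3)$ and $(2)\Leftrightarrow(3)$. The biggest difference is in the direction out of simplicity: the paper argues $(1)\Rightarrow(2)$ by contradiction, introducing the closed sets $F_j = \{\pi \in \widehat{A}_j : \widehat{\phi}_{j,i}(\pi)\cap U = \varnothing\}$, forming ideals $I_j$ from them, and assembling an inductive limit ideal $I = \lim(I_j,\phi_j)$ that would be proper and nonzero, contradicting simplicity. Your $(1)\Rightarrow(3)$ is instead a direct algebraic argument: express $1_A$ approximately as $\sum b_k\,\phi_{\infty,i}(a)\,c_k$, push the coefficients down to a finite stage $A_j$, and invoke invertibility to see that $\phi_{j,i}(a)$ generates $A_j$. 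This is arguably cleaner and avoids the topological bookkeeping about closedness of the $F_j$. For $(3)\Rightarrow(1)$, your use of R\o rdam's $(a_i-\epsilon)_+$ trick to turn a mere norm approximant $a_i\in A_i$ of $a\in A$ into an honest nonzero element of $\overline{AaA}\cap A_i$ is more explicit and self-contained than the paper's one-line version, which implicitly relies on the standard fact that a nonzero closed ideal of an inductive limit with injective connecting maps meets some $A_i$ nontrivially (provable by noting that $I\cap A_n=0$ for all $n$ would make the quotient map isometric on a dense subalgebra). Both routes are sound; yours trades the paper's construction of an explicit proper ideal for a pair of approximation arguments, and in doing so makes the $(3)\Rightarrow(1)$ step fully rigorous rather than leaving the density subtlety to the reader.
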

\begin{proof} Fix $i$ and let $U\subset \widehat{A}_i$ be open. For each $j\geq i$, let $F_j= \{ \pi\in \widehat{A}_j: \widehat{\phi}_{j,i}(\pi) \cap U = \varnothing\}$. Consider $\rho\in\widehat{A}_j$ and suppose $\widehat{\phi}_{j,i}(\rho)$ contains a point $\pi\in U$.  Then we may find $f\in A_i$ such that $\pi(f)\neq 0$ and $\sigma(f)=0$ for all $\sigma\in \widehat{A}_i \setminus U$. Therefore $\ker(\rho)$ does not contain  $\bigcap_{\sigma\in F_j} \ker(\sigma)$ since $\pi_{j,i}(f)$ lies in the latter but not the former. It follows that $F_j$ is closed.
Suppose $F_j$ is non-empty for each $j\geq i$. Then for each $j$, we may construct a closed non-zero ideal $I_j= \{a\in A_j: \pi(a)=0\textrm{ } \forall \pi \in F_j\}$. For any $k\geq j$, $\widehat{\phi}_{k,j} \circ \widehat{\phi}_{j,i}=  \widehat{\phi}_{k,i}$. It follows that if $\pi(a)=0$ for all $\pi \in F_j$ then $\pi(\phi_{k,j}(a))=0$ for every $\pi\in F_k$, as $\widehat{\phi}_{k,j}(\pi)\subseteq F_j$. Therefore, $\phi_{k,j}(I_j)\subseteq I_k$. We let $I= \lim (I_n , \phi_n)$, and since $1_A$ cannot lie in $I$, $I$ is a proper ideal of $A$. Therefore (1) implies (2). 
For any non-zero $a\in A_i$, let $U\subset \widehat{A}_i$ be the set $\{\pi : \pi(a)\neq 0\}$. Applying condition (2) immediately implies condition (3). 
When (3) holds, it is clear that for any non-zero $a\in A_i$, there exists $j>i$ such that $\phi_{j,i}(a)$ does not lie in any proper ideal of $A_j$. Therefore any ideal of $A$ which contains a non-zero element of $A_i$ must contain all of $A_j$ for some $j$. Since $A_j$ contains $1_A$, the ideal contains all of $A$.\end{proof}

We restate Lemma 2.1 of \cite{phillips2} in our notation.

\begin{lemma} Suppose that $A$ is a DSH algebra. For $n_i\in\{n_1,\ldots, n_l\}$, $\widehat{A}(n_i)$ is homeomorphic to $X_i \setminus Y_i$. For $n\notin\{n_1,\ldots, n_l\}$, $\widehat{A}(n)$ is empty.\end{lemma}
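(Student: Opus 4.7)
The plan is to argue by induction on the length $l$ of the DSH composition sequence, exploiting the fact that a DSH algebra is, by construction, a particular kind of RSH algebra (the diagonal homomorphisms are a specific form of the unital homomorphisms $\phi_i$ appearing in the RSH definition). For $l=1$ we have $A=C(X_1,M_{n_1})$: its irreducible representations are exactly evaluations at points of $X_1$, each of dimension $n_1$, and since no $Y_1$ has been introduced the claim is immediate.

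For the inductive step I would analyse the pullback
\[A_{i+1}=A_i\oplus_{C(Y_{i+1},M_{n_{i+1}})} C(X_{i+1},M_{n_{i+1}}).\]
The coordinate projection $\pi_1\colon A_{i+1}\to A_i$, $(a,f)\mapsto a$, is surjective by Tietze extension applied to $\phi_i(a)$, and its kernel consists of pairs $(0,f)$ with $f|_{Y_{i+1}}=0$, which identifies as an ideal in $A_{i+1}$ with $C_0(X_{i+1}\setminus Y_{i+1},M_{n_{i+1}})$. We therefore obtain the short exact sequence
\[0\to C_0(X_{i+1}\setminus Y_{i+1},M_{n_{i+1}})\to A_{i+1}\to A_i\to 0.\]
Any irreducible representation of $A_{i+1}$ either vanishes on the ideal and factors through $A_i$, in which case the inductive hypothesis applies, or restricts to a non-zero irreducible representation of the ideal and is hence unitarily equivalent to evaluation at some $x\in X_{i+1}\setminus Y_{i+1}$, which has dimension $n_{i+1}$ (since the evaluation is surjective onto $M_{n_{i+1}}$). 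Bounded dimension of the representations of the subhomogeneous algebra then rules out any dimension outside $\{n_1,\ldots,n_l\}$.

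The homeomorphism claim reduces to checking that on each stratum the hull–kernel topology agrees with the subspace topology from $X_j$: a net of point evaluations $\mathrm{ev}^j_{x_\alpha}$ converges to $\mathrm{ev}^j_x$ in $\widehat{A}$ iff $x_\alpha\to x$ in $X_j\setminus Y_j$. The one subtlety, which I expect to be the main technical point, is the behaviour at the boundary: a net in $X_{i+1}\setminus Y_{i+1}$ whose underlying points limit into $Y_{i+1}$ does not converge to an $n_{i+1}$-dimensional representation, because the diagonal splitting forced by $\phi_i$ decomposes any such limiting representation into lower-dimensional pieces coming from earlier strata. Since the DSH structure is merely a specialisation of the RSH structure, this argument just reproduces Lemma 2.1 of \cite{phillips2} in the present notation, so one could equally well invoke the cited result directly; if different levels $i,j$ share the same dimension label, the statement should of course be read as a disjoint union over all $i$ with $n_i=n$.
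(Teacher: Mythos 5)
The paper does not prove this lemma at all: it is presented explicitly as a restatement of Lemma~2.1 of \cite{phillips2}, with the proof deferred to that reference, so there is no argument in the paper to compare yours against. Your inductive argument is the standard one for recursive subhomogeneous algebras and is essentially correct; since DSH algebras are RSH algebras with extra structure, you are indeed just reproducing Phillips's proof, as you note. Two small remarks. First, once you have the exact sequence $0\to C_0(X_{i+1}\setminus Y_{i+1},M_{n_{i+1}})\to A_{i+1}\to A_i\to 0$, the homeomorphism claim follows at once from general spectral theory rather than from a net calculation: for any ideal $I\triangleleft B$, $\widehat{I}$ is canonically an \emph{open} subspace of $\widehat{B}$ (with its intrinsic hull--kernel topology), and the closed complement is canonically $\widehat{B/I}$. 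Here $\widehat{I}\cong X_{i+1}\setminus Y_{i+1}$ and $\widehat{A_{i+1}}\setminus\widehat{I}\cong\widehat{A_i}$, and applying the inductive hypothesis to $\widehat{A_i}$ finishes the topological part without having to verify boundary behaviour by hand. Second, your caveat about repeated dimensions is well placed: the lemma as stated implicitly assumes the $n_i$ are distinct and should otherwise be read as a disjoint union over all $i$ with $n_i=n$; likewise the base case takes $Y_1=\emptyset$, which is consistent with the DSH definition, where $Y_1$ is never used.
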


The following lemma allows us to modify a non-invertible element to produce one with a zero cross.

\begin{lemma} Let $A$ be a DSH algebra, let $\epsilon>0$ and suppose $f\in A$ is non-invertible. Then there exists $f^\prime \in A$ such that $\left|f-f^\prime\right|<\epsilon$ and a unitary $v\in A$ such that for some $i$, $(v f^\prime)_i$ has a zero cross in position $1$ everywhere in some open set $U\subseteq \widehat{A}$ with $\overline{U}\subseteq (X_i\setminus Y_i)$. Moreover, there exists $\Delta\in A$ such that for every $x$, $\Delta(x)$ is a diagonal matrix with entries in $[0,1]$ where $\Delta(x)_{k,k}>0$ implies $f$ has a zero cross at position $k$ and $\Delta(x)_{1,1}=1$ for all $x\in U$. \label{zero}\end{lemma}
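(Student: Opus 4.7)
My plan is to use subhomogeneity to locate a point of singularity, construct both $v$ and $f'$ locally near that point, and then invoke Lemma~\ref{indicator} for $\Delta$.

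Since $f$ is non-invertible in the subhomogeneous algebra $A$, some irreducible representation sends $f$ to a singular matrix. Using the identification $\widehat{A}(n_i)\cong X_i\setminus Y_i$ from the preceding lemma, I would select an index $i$ and a point $x_0\in X_i\setminus Y_i$ with $f_i(x_0)$ singular, and then an open neighborhood $U_0$ of $x_0$ with $\overline{U_0}\subseteq X_i\setminus Y_i$ (possible since $X_i\setminus Y_i$ is open). Pick unit vectors $\xi\in\ker f_i(x_0)$ and $\eta\in\ker f_i(x_0)^*$; by shrinking $U_0$ and invoking continuity of the singular value decomposition, extend these to continuous unit sections $\xi(x),\eta(x)$ on $\overline{U_0}$ with $\|f_i(x)\xi(x)\|$ and $\|f_i(x)^*\eta(x)\|$ each less than $\epsilon/3$ throughout $U_0$.

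Next, I would define $f'$ by a rank-two correction localized to $U_0$. With a cutoff $\psi\in C(X_i,[0,1])$ equal to $1$ on a smaller neighborhood $U$ of $x_0$ and supported in $U_0$, modify $f_i$ by
\[
f'_i(x)=f_i(x)-\psi(x)\bigl[f_i(x)\xi(x)\xi(x)^*+\eta(x)\eta(x)^*f_i(x)-\eta(x)\eta(x)^*f_i(x)\xi(x)\xi(x)^*\bigr],
\]
and extend $f'$ compatibly to the other summands via the DSH diagonal decomposition. Since $\overline{U_0}\cap Y_i=\emptyset$, the perturbation affects $f_i$ only on the open stratum of $X_i$ and lifts uniquely to higher-dimensional blocks. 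The rank-two correction has norm controlled by $\|f_i\xi\|+\|f_i^*\eta\|<\epsilon$, and on $U$ it yields $f'_i(x)\xi(x)=0$ and $\eta(x)^*f'_i(x)=0$. Build $v\in A$ by defining $v_i(x)$ on $U$ to be a unitary taking $\xi(x)$ to $e_1$ and with first row $\eta(x)^*$, smoothly transitioning to the identity on $X_i\setminus\overline{U_0}$ and taken to be the identity on every other $X_j$; the compatibility with the DSH structure holds exactly because $v$ is non-trivial only on a patch disjoint from $Y_i$.

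The main technical difficulty is reconciling the two conditions on $v_i(x)$: unless $\xi(x)=\eta(x)$, asking that $v_i(x)\xi(x)=e_1$ and $v_i(x)\eta(x)=e_1$ is over-determined. The resolution is to incorporate an additional small symmetrizing rank-one correction into $f'$ so that at each $x\in U$ the perturbed matrix $f'_i(x)$ acquires a common unit null vector $\zeta(x)\in\ker f'_i(x)\cap\ker f'_i(x)^*$, after which $v_i(x)$ simply rotates $\zeta(x)$ to $e_1$; the size of this symmetrization is again controlled by the smallness of $\|f_i\xi\|$ and $\|f_i^*\eta\|$ after further shrinking $U_0$, exploiting the freedom in choosing $\xi$ and $\eta$ within their respective null spaces. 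With this choice, on $U$ we have $(vf')_i(x)e_1=v_i(x)f'_i(x)\zeta(x)=0$ and $e_1^*(vf')_i(x)=\zeta(x)^*f'_i(x)=0$, so $(vf')_i$ has a zero cross at position $1$ on $U$.

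Finally, Lemma~\ref{indicator} applied with $K=\{0\}$, closed separation sets chosen from the complement of the zero-cross loci of $(vf')_i$, and $M$ chosen to accommodate the required block spacing, furnishes $\Delta\in A$: a diagonal function with entries in $[0,1]$, satisfying $\Delta_{1,1}(x)=1$ for all $x\in U$ and $\Delta_{k,k}(x)>0$ only where $(vf')_i$ has a zero cross at position $k$, as required.
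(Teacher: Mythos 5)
Your proposal correctly identifies the key obstruction that the paper's proof glosses over: for a single unitary $v$, demanding both $v_i(x)\xi(x)=e_1$ and $e_1^*v_i(x)=\eta(x)^*$ forces $\xi(x)=\eta(x)$, which does not hold for a generic singular $f_i(x_0)$. Your diagnosis is sharper than the paper's own (the paper simply asserts that a unitary $V$ exists with $V(M_1\oplus f(x)\oplus M_2)$ having a zero cross at position $1$, which, for a one-sided multiplication, silently requires the first column of $M_1\oplus f(x)\oplus M_2$ to vanish and is therefore not available in general). Unfortunately your proposed fix does not close the gap. Forcing $\ker f'_i(x)\cap\ker f'_i(x)^*$ to contain a unit vector $\zeta$ by a rank-one correction has cost governed by the \emph{cross} terms $\|f'_i(x)^*\xi(x)\|$ or $\|f'_i(x)\eta(x)\|$ (e.g.\ for $\zeta=\xi$, the perturbation $\xi\xi^*f'_i$ needed to kill $(f'_i)^*\xi$ has norm exactly $\|(f'_i)^*\xi\|$), \emph{not} by $\|f_i\xi\|$ and $\|f_i^*\eta\|$, which is what you assert. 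These cross terms are not small: take $f_i(x_0)=\left(\begin{smallmatrix}0&1\\0&0\end{smallmatrix}\right)$ with $\xi=e_1$, $\eta=e_2$; then $\|f_i(x_0)\xi\|=\|f_i(x_0)^*\eta\|=0$ but $\|f_i(x_0)^*\xi\|=1$. Shrinking $U_0$ and ``freedom in choosing $\xi,\eta$ within their null spaces'' does not help, because when the kernel and cokernel are one-dimensional there is no freedom at all. The correct resolution is to allow two unitaries $v_1,v_2$ and prove that $(v_1f'v_2)_i$ has a zero cross at position $1$: one unitary rotates $e_1$ into $\ker f'_i$, the other rotates $\ker(f'_i)^*$ to $e_1$; this is harmless downstream since the output of Lemma~\ref{zero} is only ever used inside further two-sided unitary sandwiches.

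A secondary concern: you perturb $f_i$ on a neighborhood $U_0\subseteq X_i\setminus Y_i$ and assert the modification ``lifts uniquely to higher-dimensional blocks.'' That propagation is non-trivial: if a point of $U_0$ appears as a corresponding point for some $y\in Y_j$ with $j>i$, then $f_j|_{Y_j}$ changes and must be re-extended continuously to $X_j$ while staying in $A$ and within the $\epsilon$ budget; this can cascade through indices. The paper avoids the cascade entirely by first passing to the \emph{largest} index $i'$ such that $x$ lies in $\overline{\widehat{A}(n_{i'})}$, shrinking the neighborhood to avoid all higher strata, and using that each $Y_{i''}$ has empty interior, so the perturbation on $X_{i'}$ affects no higher-index constraints. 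You should either adopt that reduction or supply the cascading extension argument; as written it is waved over. Finally, using Lemma~\ref{indicator} for $\Delta$ is a reasonable alternative to the paper's direct construction, but note Lemma~\ref{indicator} produces indicators supported near the $B_{i,k}$, whereas what is needed here is an indicator supported inside $U$ and vanishing off it, so some adaptation is needed rather than a direct invocation.
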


\begin{proof}Since $f$ is non-invertible, there exists $X_i$ and a point $x\in X_i$ such that $f(x)$ is a non-invertible matrix. Reducing to a non-invertible summand if necessary, we may assume $x\in X_i\setminus Y_i$. Let $i^\prime$ be the greatest integer such that $x$ lies in the closure of $\widehat{A}(n_{i^\prime})$. Choose an open neighbourhood $U_1\subseteq \widehat{A}$ of $x$ such that $U_1 \cap \widehat{A}(n_{i^{\prime\prime}})=\emptyset$ for all $i^{\prime\prime} > i^{\prime}$. Since $x$ lies in the closure of $(X_{i^\prime}\setminus Y_{i^\prime})\subset \widehat{A}$, there exists a point $y\in Y_{i^{\prime}}$ such that $f_{i^\prime}(y)= M_1\oplus f_{i}(x) \oplus M_2$ for some matrices $M_1,M_2$. We may find a nearby point $x^{\prime}\in U_1 \cap (X_{i^{\prime}}\setminus Y_{i^\prime})$ such that $\left|f_{i^\prime}(x^{\prime})-M_1\oplus f(x) \oplus M_2\right|<\frac{\epsilon}{2}$. 

It follows from Proposition 3.6.3 of \cite{dixmier} that $\widehat{A}_{n_{i^{\prime}}}\bigcap U_1$ is open in $U_1$. 

Since $U_1$ does not lie in the closure of $X_{i^{\prime\prime}}\setminus Y_{i^{\prime\prime}}$ for any $i^{\prime\prime} > i^{\prime}$, we may, by assuming that each $Y_{i^{\prime\prime}}$ has empty interior, conclude that there is no point in any $Y_{i^{\prime\prime}}$ has a corresponding point $x_{y,t}\in U_1$. We may then perturb $f$ on $U_1\cap (X_{i^{\prime}}\setminus Y_{i^{\prime}})$ without altering $f_{i^{\prime\prime}}$ for any $i^{\prime\prime} > i^{\prime}$. In particular, we can obtain a function $f^\prime$ such that $ \left|f-f^\prime\right|<\epsilon$ which is equal to $M_1\oplus f_i(x) \oplus M_2$ on a neighbourhood $U_2$ of $x^{\prime}$  with $\overline{U_2}\subseteq (X_{i^\prime}\setminus Y_{i^\prime}) \cap U_1$. Then there exists a unitary matrix $V$ such that $V(M_1\oplus f(x) \oplus M_2)$ has a zero cross in position $1$.  By considering a path of unitaries in $U(n_{i^\prime})$ between $V$ and the identity, we may  extend $V$ to a unitary $v$ in $A$ which is constantly equal to $V$ on $U_2$ and is the identity everywhere outside $(X_{i^\prime}\setminus Y_{i^\prime})\cap U_1$. 

To construct $\Delta$, begin by setting $\Delta_{i^{\prime\prime}}=0$ for $ i^{\prime\prime}< i^{\prime}$. On $X_{i^{\prime}} \setminus Y_{i^{\prime}}$, we let $\Delta_{i^\prime}(x)$ be zero everywhere except on $U_2$ and always zero in every coordinate  of $\Delta_{i^\prime}$ except $\Delta_{i^\prime}(x)_{1,1}$. We consider $\Delta_{i^\prime}(x)_{1,1}$ to be in $C(X_{i^\prime}, [0,1])$ and choose a function such that $\Delta_{i^\prime}(x)_{1,1}=1$ everywhere on some non-empty open set $U_3\subseteq U_2$ and $0$ everywhere outside $U_2$. Since $\overline{U_2}$ is separated from $X_{i^{\prime\prime}}$ for any $i^{\prime\prime} > i^{\prime}$, we may set $\Delta_{i^{\prime\prime}}$ to be $0$ on $X_{i^{\prime\prime}}$.

We find that $f^\prime$,$v$, $\Delta$, and $U_3$ then have the desired properties.
\end{proof}

Note that the set $U_3$ is, using the bijection we have established between $(X_i\setminus Y_i)$ and $\widehat{A}(n_i)$, also an open set in $\widehat{A}$.

Suppose we have a limit of DSH algebras $A_j$. In the following, we must eliminate the case in which the representations of $A_j$ do not grow in dimension. By \ref{simple}, when the sequence is injective, such a limit can only be simple if the algebras $A_j$ are all of finite spectrum. Therefore it is enough to assume each $A_j$ is infinite dimensional.

\begin{lemma} Suppose that $A= \mathrm{lim}(A_j,\phi_{j+1,j})$ where $A_j$ is infinite dimensional and DSH, $\phi_{j+1,j}$ is diagonal and $A$ is simple. Suppose that $f$ is a non-invertible element in $A_j$ and let $\epsilon>0$. Then there exists $f^\prime\in A_j$ with $\left|f-f^\prime\right|<\epsilon$ and a constant $M$ such that for any $N\in\mathbb{N}$, there exist $j^\prime >j$, unitaries $V_1,V_2 \in A_{j^\prime}$ such that for every $X_i^{j^\prime}, k\leq n_i^{j^\prime}$, there is an open set $U_{i,k}$ containing $B_{i,k}^{j\prime}\subset X_i^{j^\prime}$, such that for each $x\in U_{i,k}$,  $(V_1 \phi_{j^\prime,j}(f^\prime)V_2)_i(x)$ has zero crosses in positions $k,M+k,2M+k,\ldots,(N-1)M+k$ and $r((V_1 \phi_{j^\prime,j}(f^\prime)V_2)_i(x))<R+M-1$, where $R$ is the size of the largest representation of $A_j$.\label{first}\end{lemma}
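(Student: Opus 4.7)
First, apply Lemma~\ref{zero} to $f$ with tolerance $\epsilon$ to obtain a perturbation $f^\prime \in A_j$ with $\|f - f^\prime\| < \epsilon$, a unitary $v_0 \in A_j$, and an open set $U \subseteq \widehat{A}_j$ on which the matrix $(v_0 f^\prime)_{i_0}(x)$ carries a zero cross at position $1$. Set $M = R$, the dimension of the largest irreducible representation of $A_j$; this depends only on $f$, as required by the statement.

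Next, use simplicity to propagate these zero crosses. By Theorem~\ref{simple}\,(2) applied to $U$, there exists $j_1 > j$ such that every irreducible representation $\pi$ of $A_{j_1}$ satisfies $\widehat{\phi}_{j_1,j}(\pi) \cap U \neq \varnothing$. Since the maps $\phi_{j+1,j}$ are diagonal and the algebras are infinite-dimensional, iterating this step produces, for any $N$, a stage $j^\prime > j$ with the stronger property that in the block-diagonal decomposition of $\phi_{j^\prime,j}(f^\prime)(x)$ at each $x \in X_i^{j^\prime}$, any window of $M$ consecutive diagonal indices contains at least one block whose preimage lies in $U$. Diagonality is essential here: it guarantees that $\phi_{j^\prime,j}(f^\prime)(x)$ is the block-diagonal concatenation of evaluations of $f^\prime$ at points in $\widehat{A}_j$, each block of size at most $R = M$.

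Multiplication on the left by $V^{(0)} = \phi_{j^\prime,j}(v_0) \in A_{j^\prime}$ then introduces a zero cross at the starting position of every block whose preimage lies in $U$, so the resulting matrix has zero crosses populating every window of $M$ consecutive indices. To shift these zero crosses onto the arithmetic progression $k, k+M, \ldots, k + (N-1)M$ on a neighborhood of each $B_{i,k}^{j^\prime}$, I construct a diagonal indicator $\Theta \in A_{j^\prime}$ via Lemma~\ref{indicator} and then assemble permutation-valued unitaries $W_1, W_2 \in A_{j^\prime}$ from the transposition unitaries $u_{(s\;t)}$ of Section~2, as in Lemmas~\ref{permute} and~\ref{V}. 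Setting $V_1 = W_1 V^{(0)}$ and $V_2 = W_2$, the left and right factors act independently on rows and columns, so the zero rows and zero columns can be moved to the prescribed positions. The bound $r < R + M - 1$ follows from Lemma~\ref{block2}, since each block of $\phi_{j^\prime,j}(f^\prime)$ has radius at most $R$ and each permutation conjugation adds at most $M - 1$. The main obstacle is producing a single pair $(V_1, V_2)$ that simultaneously realizes the correct arithmetic progression of zero-cross positions on a neighborhood of every $B_{i,k}^{j^\prime}$: the targets $k + tM$ depend on $k$, and the permutations must respect the diagonal decomposition at every $Y_i^{j^\prime}$ so that $V_1, V_2$ genuinely lie in $A_{j^\prime}$. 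Lemma~\ref{indicator} is the device that overcomes this, since its output is compatible with the DSH structure and the assembled permutations are therefore continuous in $x$ and glue correctly across each $Y_i^{j^\prime}$.
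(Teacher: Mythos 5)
Your choice $M = R$ is the critical error, and it breaks the propagation-of-zero-crosses step at its heart. The density of zero crosses you need at level $j^\prime$ does not come from the size $R$ of the blocks at level $j$; it comes from the size of the blocks at the \emph{intermediate} stage $j^{\prime\prime}$ (your $j_1$) chosen by simplicity. Simplicity gives you that each irreducible representation of $A_{j^{\prime\prime}}$ has at least one component from $U$, and hence a $1$ somewhere on the diagonal of $\Delta^{\prime\prime}_i(x)$; but that representation may have dimension $n$ vastly exceeding $R$, so the zero crosses produced in $\phi_{j^\prime,j}(v f^\prime)$ can be as far as $n$ (not $R$) apart along the diagonal. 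The paper therefore takes $M = 2n$, where $n$ is the largest dimension of an irreducible representation of $A_{j^{\prime\prime}}$: any window of $2n$ consecutive diagonal indices at level $j^\prime$ necessarily contains a complete $j^{\prime\prime}$-block, and therefore a $1$ in $\Delta^\prime$. This is the entire reason the diagonal element $\Delta$ of Lemma~\ref{zero} is carried along — you need to track where zero crosses actually land, which your proposal drops. With $M = R$, your claim that ``any window of $M$ consecutive diagonal indices contains at least one block whose preimage lies in $U$'' is simply false in general (consider the case where $U$ meets only a low-dimensional component of $\widehat{A}_j$ but the representations of $A_{j^{\prime\prime}}$ have dimension much larger than $R$). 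Note that $M$ depending on $j^{\prime\prime}$ — and hence on $f$, $\epsilon$, and the inductive system — is still compatible with the lemma's quantifier order: $M$ is fixed before $N$ is chosen.

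A secondary but real concern: you set $V_1 = W_1 V^{(0)}$ and $V_2 = W_2$ with $W_1, W_2$ acting ``independently on rows and columns.'' A zero cross at position $k$ is a simultaneous zero row \emph{and} zero column at the same index $k$; permuting rows by one unitary and columns by a different one moves the zero row and zero column to potentially different indices and generally destroys the zero cross. The paper avoids this by conjugating $g = \phi_{j^\prime,j}(vf^\prime)$ by a single unitary $V$ (so $V_1 = V\phi_{j^\prime,j}(v)$ and $V_2 = V^*$), and the zero-cross-preservation is established entry by entry in Lemmas~\ref{permute} and \ref{block1}. Your argument needs to either force $W_2 = W_1^*$ or supply a separate justification that the independent left/right factors realize coincident zero rows and columns — which you do not do. Finally, the unitary $V$ is not just a permutation: it interpolates using $u_{(k\;j)}(\Theta_{i,k}\Delta^\prime_{i,j})$, and both indicator families $\Theta$ and $\Delta^\prime$ are needed (you only invoke $\Theta$), precisely so that the interpolation is nontrivial only where zero crosses actually exist.
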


\begin{proof} Since $f$ is non-invertible, we may find $f^\prime,v,\Delta\in A_j$ and $U\subseteq\widehat{A}_j$, as in Lemma~\ref{zero}. By Theorem~\ref{simple}, we may choose $j^{\prime\prime}$ such that for every point $x\in \widehat{A}_{j^{\prime\prime}}$, $\widehat{\phi}_{j^{\prime\prime},j}(x)$ contains a point in $U$. If $n$ is the the largest dimension of the irreducible representations of $A_{j^{\prime\prime}}$ then we choose $M=2n$.

Consider $\Delta^{\prime\prime}$, the image of $\Delta$ in $A_{j^{\prime\prime}}$. Because $\phi_{i^{\prime\prime},i}$ is diagonal, $\Delta_i^{\prime\prime}(x)$ is a diagonal matrix for every point $x$ in every $X_i^{j^{\prime\prime}}$, and its entries are in $[0,1]$. It follows from our choice of $j^{\prime\prime}$ that for every $x\in X_i^{j^{\prime\prime}}$, $\Delta_i^{\prime\prime}(x)$ has an entry that is equal to $1$ somewhere on the diagonal coming from the point in $\widehat{\phi}_{j^{\prime\prime},j}(x)$ that lies in $U$.

Let $N\in \mathbb{N}$ be arbitrary. We can choose $j^\prime$ such that $n_1$, the smallest dimension of a representation of $A_{j^\prime}$, is arbitrarily large. In particular, we choose $n_1$ to be at least $NM$.

Let $\Delta^\prime=\phi_{j^\prime,j}(\Delta)$, and consider $\Delta^\prime$ as the diagonal image of $\Delta^{\prime\prime}$. As before, we condense our notation, writing $\Delta_{i,k}^\prime(x)$ rather than $\Delta_i^\prime(x)_{k,k}$. For any $i$, $x\in X_i^{j^\prime}$ and any entry $k< n_i^{j^\prime}-M$, there must be an entry $k^\prime \in \{k,k+1,\ldots, k+M-1\}$ for which $\Delta_{i,k}^\prime(x)=1$.

Let $g= \phi_{j^\prime,j} (v f^\prime)$. Since $\phi_{j^\prime,j}$ is diagonal, $g_i(x)$ is block diagonal for every $x$ in every $X_i^{j^\prime}$, and the maximum block size is $R$, so $r(g_i(x))\leq R$. Moreover, whenever $\Delta_{i,k}^\prime(x)>0$, $g(x)$ has a zero cross at position $k$.

We will need a second set of indicator functions. We construct $\Theta\in A_{j^\prime}$ as in Lemma~\ref{indicator} for our already given value of $M$ and $K=\{0,M, \ldots, (N-1)M\}$. For each $ k,X_i^{j^\prime}$, there is an open set $U_{i,k}$ containing $B_{i,k}^{j^\prime}$, such that for each $x\in U_{i,k}$, $\Theta_{i,k}(x) = \Theta_{i,k+M}(x)= \cdots = \Theta_{i,k+(N-1)M}(x)=1$. For $k\in 0,\ldots, M-1$, $\Theta_{i,n_i-k}$ is the constant zero function. Finally, for arbitrary $x,k$, at most one of $\Theta_{i,k}(x),\ldots,\Theta_{i,k+M-1}(z)$ is non-zero.

We can now construct a unitary element $V$ such that $VgV^*$ has zero crosses where required and a bounded diagonal radius. On $X_i^{j^\prime}$, we let $u_k(x)\in C(X_i^{j^\prime},M_{n_i})$ be given by 
$$u_k(z)=u_{(k\;k+1)}(\Theta_{i,k}(x)\Delta_{i,k+1}^\prime(x)) \cdots u_{(k\;k+M-1)}(\Theta_{i,k}(x)\Delta_{i,k+M-1}^\prime).$$

We then define $V_i\in C(X_i^{j^\prime},M_{n_i^{j^\prime}})$ by $V_i= \prod_{k=1}^{n_i-(M-1)} u_k$. When $\Theta_k(x)=0$, $u_i(x)$ is the identity, so $V_i$ can be rewritten at each $x$ as a product of $u_{k_1}, \ldots, u_{k_n}$ where, by the definition of $\Theta$, $k_{i+1}-k_i \geq M$. When $x\in U_{i,k}$, $\Theta_{i,k}(x) = \Theta_{i,k+M}(x)= \cdots = \Theta_{i,k+(N-1)M}(x)=1$ and there is always at least one entry in $\Delta_{i,k+aM}^\prime(x),\ldots,\Delta_{i,k+(a+1)M-1}^\prime(x)$ that is equal to $1$ for $a\in \{0,\ldots N-1\}$, hence by Lemma~\ref{block1}, $(VgV^*)_i(x)$ has zero crosses at positions $k, k+M, \ldots, k+(N-1)M$. Also by Lemma~\ref{block2}, we have that $r((VgV^*)_i(x))\leq R+M-1$ for all $x$.

It remains to show that $V= \bigoplus_i V_i\in A_{j^\prime}$. For $x\in Y_i^{j^\prime}\subseteq X_i^{j^\prime}$, with corresponding points $x_{1}, \ldots x_{s}$, where $x_{m} \in X_{i_m}^{j^\prime}$ and let $K=\{k_1=1,\ldots, k_s\}$ be the set of $k$ such that 
$x\in B_{i,k}^{j^\prime}$. We decompose the product $V_i= \prod_{k=1}^{n_i-(M-1)} u_k$ into $\prod_{m=1}^s \prod_{k=k_m}^{k_{m+1}-(M-1)} u_{k}$ where we let $k_{s+1}=n_i+1$. Consider then the product $v_m(x)=\prod_{k=k_m}^{k_{m+1}-(M-1)} u_{k}$,
$$v_m(x)=\prod_{k=k_m}^{k_{m+1}-(M)} \prod_{a=1}^{M-1} u_{( k\;k+a)}(\Theta_{i,k} (x)\Delta_{i,k+a}^\prime(x)),$$
and note first that because $\Theta_{i,k}(x)=0$ for $k\in k_m-M+1, \ldots, k_m-1$, this is the same as taking the product in $k$ to $k_{m+1}-1$. $v_m(x)$ is block diagonal of the form $1_{k_m} \oplus B \oplus 1_{n_i-k_{m+1}}$ where $B$ is a square $k_{m+1}-k_m$ dimensional matrix. Furthermore, the matrix $B$ is given by 
$$u_{(k\;k+1)}(\Theta_{i,k_m+k}(x)\Delta_{i,k_m+k+1}^\prime(x)) \cdots u_{(k\;k+M-1)}(\Theta_{i,k}(x)\Delta_{i,k+M-1}^\prime(x)).$$
Since we have constructed $\Theta$ and $\Delta^\prime$ as elements of $A_{j^\prime}$, $\Theta_{i,k_m+k}(x)=\Theta_{i_m,k}(x_m)$ and $\Delta_{i,k_m+k}^\prime(x)=\Delta_{i_m,k}^\prime(x_m)$ where $k< k_{m+1}-k_m$. We have that $B= V_{i_m}(x_m)$ and $V_i= V_{i_1}(x_1)\oplus\cdots\oplus V_{i_{s}}(x_s)$. Therefore $V\in A_{j^\prime}$.

Finally, consider the unitaries $V \phi_{j^\prime,j}(v)$ and $V^*$. These unitaries satisfy the lemma, since $V \phi_{j^\prime,j}(vf^\prime) V^*= VgV^*$.\end{proof}

\section{The Main Theorem}

\begin{theorem} Suppose that $A=\mathrm{lim}(A_j, \phi_{j})$ is a simple limit of DSH algebras with diagonal maps. Then $A$ has stable rank one.\label{main}\end{theorem}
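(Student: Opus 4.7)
The plan uses Rørdam's trick: I will show that for any non-invertible $f\in A_j$ and any $\epsilon>0$, there is a perturbation $f_1\in A_j$ with $\|f-f_1\|<\epsilon/3$, an index $j'>j$, a small element $\delta\in A_{j'}$ with $\|\delta\|<\epsilon/3$, and unitaries $u,v\in A_{j'}$ such that $u\,\phi_{j',j}(f_1)\,v+\delta$ is nilpotent. Any nilpotent element of a unital $C^*$-algebra has spectrum $\{0\}$, hence is a norm limit of invertibles (its translates $-\lambda\cdot 1$ are invertible for every $\lambda\neq 0$), so multiplying by $u^*$ on the left and $v^*$ on the right shows $\phi_{j',j}(f_1)+u^*\delta v^*$ is a limit of invertibles in $A$, which places $\phi_{\infty,j}(f)$ within $\epsilon$ of an invertible. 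Density of $\bigcup_j\phi_{\infty,j}(A_j)$ in $A$ then yields stable rank one.

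To produce the nilpotent I would chain the four main lemmas. Since Lemma~\ref{first} already absorbs Lemma~\ref{zero}, I apply it directly to $f$: it returns $f_1\in A_j$ with $\|f-f_1\|<\epsilon/3$, a constant $M$ (depending on the intermediate level supplied by Theorem~\ref{simple}), and---after fixing any $N$---an index $j'>j$ together with unitaries $V_1,V_2\in A_{j'}$ so that $V_1\phi_{j',j}(f_1)V_2$ has zero crosses at the evenly spaced positions $k,k+M,\dots,k+(N-1)M$ on open neighbourhoods of every $B^{j'}_{i,k}$ and diagonal radius strictly less than $R+M-1$, where $R$ is the top matrix size in $A_j$. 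I would fix $N\geq R+M+2$ before selecting $j'$, arranging also that $n_1^{j'}\geq NM$.

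Next I apply Lemma~\ref{blockpoint} to $V_1\phi_{j',j}(f_1)V_2$, obtaining $g\in A_{j'}$ within $\epsilon/3$ of it that retains the zero-cross pattern, does not increase the diagonal radius, and now exhibits block points on open neighbourhoods of each $B^{j'}_{i,k}$. Lemma~\ref{second} then supplies a unitary $V_3\in A_{j'}$ that consolidates the $N$ spread-out zero crosses of $g$ into $N$ consecutive ones at positions $k,k+1,\dots,k+N-1$, adding at most $2$ to the diagonal radius; the choice $N\geq R+M+2$ keeps the radius strictly below $N$. Lemma~\ref{third} finally supplies a unitary $V_4\in A_{j'}$ whose right multiplication renders the resulting matrix function strictly lower triangular at every point of every $X_i^{j'}$. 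Such an element is nilpotent (its $\bigl(\max_i n_i^{j'}\bigr)$th power vanishes summand-wise), and setting $u=V_3V_1$, $v=V_2V_3^*V_4$ and $\delta=V_3\bigl(g-V_1\phi_{j',j}(f_1)V_2\bigr)V_3^*V_4$ puts this nilpotent into the form $u\,\phi_{j',j}(f_1)\,v+\delta$ required by the first paragraph.

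The principal obstacle is not any individual lemma but the careful coordination of their hypotheses: Lemma~\ref{first} localises zero crosses to open neighbourhoods of the $B_{i,k}$ and bounds the diagonal radius, Lemma~\ref{second} additionally demands block points on those same neighbourhoods, and Lemma~\ref{third} demands the diagonal radius sit strictly below the number of consecutive zero crosses. Simplicity of $A$ enters only through Lemma~\ref{first}, via Theorem~\ref{simple}, to ensure that the localised zero cross furnished by Lemma~\ref{zero} becomes visible in every irreducible representation of some $A_{j''}$, thereby converting into the global zero-cross pattern required downstream. Everything else is a matter of choosing $N$ large enough in advance and of bookkeeping the three small perturbations that contribute to $\epsilon$.
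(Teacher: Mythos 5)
Your proposal is correct and follows the paper's proof essentially step for step: apply Lemma~\ref{first} (which internalizes Lemma~\ref{zero} and the simplicity argument via Theorem~\ref{simple}) to obtain spread-out zero crosses and a diagonal-radius bound, perturb with Lemma~\ref{blockpoint} to force block points, consolidate with Lemma~\ref{second}, triangulate with Lemma~\ref{third}, and finish with R\o rdam's nilpotent-is-approximately-invertible trick and unwinding the unitaries. The only thing you omit, which the paper states at the outset, is the pair of preliminary reductions --- replacing $\mathrm{lim}(A_j,\phi_j)$ by an injective system via Lemma~\ref{quotient} (needed because Theorem~\ref{simple}, and hence Lemma~\ref{first}, requires injectivity) and disposing of the case where all $A_j$ are finite dimensional (in which $A$ is AF and the result is immediate), so that the hypothesis of infinite dimensionality in Lemma~\ref{first} is actually met; these should be stated explicitly, but they are routine and do not alter the substance of your argument.
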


\begin{proof} If $A$ is a limit of finite dimensional algebras then we are done, so we may assume all $A_j$ are infinite dimensional. Moreover, by Lemma~\ref{quotient}, we may assume the maps are injective.

Let $\epsilon>0$ and $a\in A$. We will show that there is an invertible element $a^\prime \in A$ such that $\left| a - a^\prime\right|<\epsilon$. We find $A_j$ and an element $f\in A_j$ such that $\left|\iota(f)-a\right|<{\epsilon}/{4}$, where $\iota$ is the inclusion of $A_j$ into $A$. If $f$ is invertible we are done, so we assume it is not. Denote by $R$ the largest dimension of a representation of $A_j$. By \ref{first}, we have $f^\prime\in A_j$ such that $\left|f-f^\prime\right|<{\epsilon}/{4}$ and $M\in\mathbb{N}$ such that for any $N\in\mathbb{N}$, we have $j^\prime > j$ and unitaries $V_1,V_2 \in A_{j^\prime}$ such that in the construction of $A_{j^\prime}\subset \bigoplus_i C(X_i, M_{n_i})$, for every $i, k$, there is an open set $U_{i,k}$ containing $B_{i,k}^{j^\prime}$, such that for each $x\in U_{i,k}$,  $G_i(x)$ has zero crosses in positions $k,M+k,2M+k,\ldots,(N-1)M+k$ and $r(G_i(x))<R+M$, where $G=V_1 \phi_{j^\prime,j}(f^\prime)V_2\in A_{j^\prime}$. In particular, we choose $N>R+M+2$.

We apply Lemma~\ref{blockpoint} to obtain $G^\prime \in A_{j^\prime}$ such that $\left|G-G^\prime\right|<{\epsilon}/{4}$, and there exist open sets $U_{i,k}^\prime$ containing each $B_{i,k}^{j^\prime}$ such that for each $x\in U_{i,k}$,  $G_i^\prime(x)$ has zero crosses in positions $k,M+k,2M+k,\ldots,(N-1)M+k$ and a block point at position $k$. Moreover, for every $x\in X_i$, $r(G^\prime_i(x))<R+M$.  We apply Lemma~\ref{second} to obtain $V_3$ such that there exist open sets $U_{i,k}^{\prime\prime}$ on which $(V_3 G^{\prime} V_3^*)_i$ has zero crosses at positions $1,\ldots, N$ where $N>R+M+2$ and for every $x\in X_i$, $r((V_3 G^\prime V_3^*)_i(x))<R+M+2$. Then by Lemma~\ref{third} there exists a unitary $V_4\in A_{j^\prime}$ such that $(V_3 G^\prime V_3^*V_4)_i(x) $ is lower triangular at every $x\in X_i$. Therefore $ V_3 G^\prime V_3^*V_4$ is nilpotent and by \cite{rordam} there exists an invertible element $G^{\prime\prime}\in A_{j^\prime}$ such that $\left| V_3 G^{\prime} V_3^*V_4-G^{\prime\prime}\right|<{\epsilon}/{4}$. Then $\left|\phi_{j^\prime,j}(f) -  V_1^* V_3^* G^{\prime\prime} V_4^*V_3 V_2^*\right|<{3\epsilon}/{4}$, and so $\left|a -  \iota(V_1^* V_3^* G^{\prime\prime}V_4^* V_3 V_2^*)\right|<\epsilon$ and the latter is invertible.\end{proof}

\begin{cor} For any compact Hausdorff space and any minimal homeomorphism $\sigma: X\to X$, the crossed product $C^*(\mathbb{Z}, X, \sigma)$ has stable rank one.\end{cor}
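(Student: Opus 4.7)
The plan is to exhibit the orbit breaking subalgebra $A_x$ of the crossed product $C^*(\mathbb{Z},X,\sigma)$ as a simple inductive limit of DSH algebras with diagonal connecting maps, apply Theorem~\ref{main} to conclude that $A_x$ has stable rank one, and then transfer the property to the crossed product via the centrally large subalgebra machinery of Archey--Phillips \cite{archey}.

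First I would choose a decreasing sequence $Y_1 \supseteq Y_2 \supseteq \cdots$ of closed neighborhoods of $x$ in $X$, each of which is the closure of an open set, with $\bigcap_n Y_n = \{x\}$. By the discussion preceding Lemma~\ref{decomp} (following the unpublished work of Q.~Lin), each $A_{Y_n}$ is a DSH algebra, with spaces $X_i^{(n)}$, return-time dimensions $n_i^{(n)}$, and restriction subspaces $Y_i^{(n)}$ produced by the first-return decomposition for $Y_n$. Since $Y_{n+1}\subseteq Y_n$, the natural inclusion $A_{Y_n}\hookrightarrow A_{Y_{n+1}}$ is exactly the map $\phi$ from Lemma~\ref{decomp}, hence is a diagonal homomorphism between DSH algebras. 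Because the generators $\{f, ug : g|_{\{x\}}=0\}$ of $A_x$ are norm-approximated by elements of $\bigcup_n A_{Y_n}$ (each continuous $g$ vanishing at $x$ is a uniform limit of functions vanishing on some $Y_n$), the inductive limit $\varinjlim (A_{Y_n}, \phi_{n+1,n})$ is canonically isomorphic to $A_x$.

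Next I would invoke the fact, noted in the paragraph preceding Lemma~\ref{decomp}, that $A_x$ is simple (this is a standard consequence of minimality of $\sigma$). Since each connecting map is injective and diagonal, Theorem~\ref{main} applies directly and yields that $A_x$ has stable rank one.

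Finally, by \cite{archey}, $A_x$ is a centrally large subalgebra of $C^*(\mathbb{Z},X,\sigma)$, and stable rank one is known to pass from a centrally large subalgebra to the ambient $C^*$-algebra (again by results in \cite{archey}). Applying this transfer result to $A_x \subseteq C^*(\mathbb{Z},X,\sigma)$ completes the proof. The only non-routine step is verifying the inductive limit description of $A_x$; the rest is a direct citation of Theorem~\ref{main} and of \cite{archey}.
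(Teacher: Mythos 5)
Your proposal matches the paper's own proof in every essential respect: both express the orbit-breaking subalgebra $A_x$ as a simple injective limit of the DSH algebras $A_{Y_n}$ along the diagonal inclusions of Lemma~\ref{decomp}, invoke Theorem~\ref{main} to get stable rank one for $A_x$, and then pass to the crossed product via the Archey--Phillips centrally-large-subalgebra theorem. The only difference is that you spell out the choice of a decreasing base $\{Y_n\}$ of closed neighbourhoods of $x$ and the density argument identifying $\varinjlim A_{Y_n}$ with $A_x$, whereas the paper compresses all of this into a single appeal to Lemma~\ref{decomp}; your version makes explicit a step the paper leaves implicit.
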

\begin{proof} Lemma~\ref{decomp} shows that $A_y$ can be constructed as a simple injective limit of DSH algebras with diagonal maps. It follows from Theorem~\ref{main} that $A_y$ has stable rank one. It is shown in \cite{archey} that $A_y$ is centrally large in $C^*(\mathbb{Z}, X, \sigma)$. By Theorem 6.3 of \cite{archey}, any infinite dimensional simple separable unital $C^*$-algebra with a centrally large subalgebra of stable rank one has stable rank one.\end{proof}

\end{document}